\documentclass[12pt]{amsart}

\usepackage{graphics, amsmath, amsfonts, amssymb, amscd, mathrsfs}
\usepackage[all]{xy}

\textwidth = 157mm
\textheight = 241mm
\evensidemargin=0mm
\oddsidemargin=0mm
\hoffset=4mm
\voffset=-27mm
\parskip =0.5mm
\parindent = 6mm
\linespread{1.05}
\pagestyle{plain}

\newtheorem{theorem}{Theorem}

\newtheorem{lemma}[theorem]{Lemma}
\newtheorem{corollary}[theorem]{Corollary}

\theoremstyle{definition}
\newtheorem{definition}{Definition}

\newtheorem*{nexample}{Example}

\newcommand{\C}{\mathbb{C}}
\newcommand{\D}{\mathbb{D}}
\newcommand{\Z}{\mathbb{Z}}
\renewcommand{\P}{\mathbb{P}}
\renewcommand{\O}{\mathscr{O}}

\newcommand{\N}{\mathbb{N}}

\newcommand{\Aut}{\operatorname{Aut}}
\newcommand{\id}{\operatorname{id}}

\title{Proper holomorphic immersions\\ in homotopy classes of maps from\\ finitely connected planar domains into $\C\times\C^*$}

\author{Finnur L\'arusson} 
\address{School of Mathematical Sciences, University of Adelaide, Adelaide SA 5005, Australia} 
\email{finnur.larusson@adelaide.edu.au}

\author{Tyson Ritter}
\address{School of Mathematical Sciences, University of Adelaide, Adelaide SA 5005, Australia} 
\email{tyson.ritter@adelaide.edu.au}

\subjclass[2010]{Primary 32H02. Secondary 32E10, 32H35, 32M17, 32Q28, 32Q40.}

\keywords{Holomorphic immersion, holomorphic embedding, proper map, Riemann surface, Oka principle, Stein manifold, circular domain, planar domain, homotopy class.}
\date{20 September 2012. Latest minor changes 25 October 2012.}

\thanks{The authors are supported by Australian Research Council grant DP120104110.}

\begin{document}

\begin{abstract}
Gromov, in his seminal 1989 paper on the Oka principle, proved that every continuous map from a Stein manifold into an elliptic manifold is homotopic to a holomorphic map. 
Previously we have shown that, given a continuous map $X \to \C\times\C^*$ from a finitely connected planar domain $X$ without isolated boundary points, a stronger Oka property holds, namely that the map is homotopic to a proper holomorphic embedding. Here we show that every continuous map from a finitely connected planar domain, possibly with punctures, into $\C\times\C^*$ is homotopic to a proper immersion that identifies at most countably many pairs of distinct points, and in most cases, only finitely many pairs. By examining situations in which the immersion is injective, we obtain a strong Oka property for embeddings of some classes of planar domains with isolated boundary points. It is not yet clear whether a strong Oka property for embeddings holds in general when the domain has isolated boundary points. We conclude with some observations on the existence of a null-homotopic proper holomorphic embedding $\C^* \to \C\times\C^*$. 
\end{abstract}

\maketitle
\tableofcontents

\section{Introduction}\label{s:introduction}
\noindent
The problem of which open Riemann surfaces can be embedded into $\C^2$ is a long-standing and difficult unsolved question in complex geometry. (In this paper the term \emph{embedding} will always refer to a holomorphic proper injective immersion.) For details of the progress made towards solving this problem we refer the reader to the introductions of the papers \cite{ForstnericWold2009} and \cite{Ritter2010}. In particular, Globevnik and Stens\o nes \cite{GlobevnikStensones1995} proved that every bounded, finitely connected planar domain in $\C$ without isolated boundary points embeds into $\C^2$. By the Koebe uniformisation theorem \cite[Ch.\ ~V, \textsection ~6, Thm.\ ~2]{Goluzin1969}, every finitely connected planar domain without isolated points in its boundary, other than $\C$ itself, is biholomorphic to such a domain. Moreover, Koebe's theorem shows that such domains are biholomorphic to what we call \emph{circular domains}, namely those domains given as the open unit disc from which a finite number of pairwise disjoint, closed discs of positive radii have been removed. The result of Globevnik and Stens\o nes was extended by Wold \cite{Wold2006}, who introduced a powerful new technique for constructing embeddings of open Riemann surfaces and used it to show that every finitely connected planar domain in $\C$ embeds into $\C^2$.

In \cite{Ritter2010} the second author considered the related embedding problem for open Riemann surfaces when the target space is the 2-dimensional elliptic Stein manifold $\C\times\C^*$. Since the target is no longer contractible, the homotopy class of the embedding becomes of interest. The main result in \cite{Ritter2010} is that, for every circular domain $X$, every homotopy class of continuous maps $X \to \C\times\C^*$ contains an embedding. The author termed this a \emph{strong Oka principle for embeddings of $X$ into $\C\times\C^*$}, as it can be viewed as a special case strengthening of Gromov's Oka principle \cite{Gromov1989}, a deep result that states every homotopy class of continuous maps from a Stein manifold to an elliptic manifold contains a holomorphic map. For further background on the Oka principle, the reader is referred to the survey \cite{ForstnericLarusson2011} and the monograph \cite{Forstneric2011}.

In the current paper we investigate maps from arbitrary finitely connected planar domains into $\C\times\C^*$. Allowing the planar domains to have boundary components that are isolated points makes the problem more difficult since Wold's technique, used in \cite{Ritter2010}, does not give properness at punctures in the domain. We have not yet been able to determine whether a strong Oka principle always holds for embeddings in this more general setting. If, however, we ask for proper holomorphic immersions that are just short of being injective, we are able to prove a generalisation for arbitrary finitely connected planar domains.

In Section~\ref{s:puncturedplanes} we show that every homotopy class of  continuous maps from a finitely punctured plane into $\C\times\C^*$ contains a proper holomorphic immersion that identifies at most either countably many or finitely many pairs of points in the domain, depending on whether the homotopy class is null or not, respectively (Theorem~\ref{t:puncturedplane}). In many cases, we in fact obtain an embedding of the given punctured plane (Theorem~\ref{t:embeddingpuncturedplane}). In Section~\ref{s:planardomains} we apply these results to show that every homotopy class of continuous maps from a circular domain with finitely many punctures into $\C\times\C^*$ contains a proper holomorphic immersion that identifies at most finitely many pairs of points (Theorem~\ref{t:puncturedcirculardomain}). Again, in many circumstances the immersion can be made an embedding, the consequence being that a strong Oka principle holds for embeddings of certain punctured circular domains into $\C\times\C^*$ (Theorem~\ref{t:embeddingpuncturedcirculardomain}, and Corollaries~\ref{c:strongOkapunctureddisc} and \ref{c:strongOkapuncturedcirculardomain}). It also follows, if we disregard the homotopy class of the embedding, that every finitely connected planar domain embeds into $\C\times\C^*$ (Corollary \ref{c:embeddingplanardomain}). The final section contains some partial results on the surprisingly difficult question of the existence of a null-homotopic embedding $\C^* \to \C\times\C^*$.

The current paper, like the second author's earlier paper \cite{Ritter2010}, relies on the fundamental embedding technique introduced by Wold in \cite{Wold2006}. Later papers by Forstneri\v c and Wold \cite{ForstnericWold2009}, and Kutzschebauch, L\o w, and Wold \cite{Kutzschebauch2009} have also been of use to us.

\section{Proper holomorphic immersions of finitely punctured planes}
\label{s:puncturedplanes}

\noindent
We begin by considering the case of a finitely connected planar domain $X\subset \C$ for which each boundary component is an isolated point. Such a domain is equal to the complex plane $\C$ with $n\ge0$ distinct points $a_1,\dots,a_n$ removed. When $n \ge 1$, $X$ is clearly homotopy equivalent to a bouquet of $n$ circles. Since $\C\times\C^*$ is homotopy equivalent to a single circle, if we ignore the trivial case $n=0$, we see that each homotopy class of continuous maps $X\to\C\times\C^*$ is completely determined by the assignment of a winding number $k_j \in \Z$ to each puncture $a_j$, $j = 1,\dots,n$.

\begin{theorem}
\label{t:puncturedplane}
Let $a_1,\dots,a_n \in \C$, $n \ge 0$, be distinct points, and let $X = \C\setminus\{a_1,\dots,a_n\}$. Then:
\begin{enumerate}
\item The null homotopy class of continuous maps $X \to \C\times\C^*$ contains a proper holomorphic immersion that identifies at most countably many pairs of distinct points in $X$.
\item Every non-null homotopy class of continuous maps $X \to \C\times\C^*$ contains a proper holomorphic immersion that identifies at most finitely many pairs of distinct points in $X$.
\end{enumerate}
\end{theorem}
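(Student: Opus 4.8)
The plan is to build the immersion coordinatewise, writing a map $X\to\C\times\C^*$ as $F=(f,g)$ with $f\colon X\to\C$ holomorphic and $g\colon X\to\C^*$ holomorphic and nowhere zero. Since $\C$ is contractible, the homotopy class of $F$ is detected entirely by $g$, and a holomorphic $g$ realising the prescribed winding numbers $k_1,\dots,k_n$ is easy to produce, for instance by modifying the rational function $\prod_{j}(z-a_j)^{k_j}$. The first observation is that such a $g$ already forces properness at every puncture $a_j$ with $k_j\neq0$: there $|g(z)|\to0$ or $|g(z)|\to\infty$, so $F(z)$ leaves every compact subset of $\C\times\C^*$ as $z\to a_j$, whatever $f$ does. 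The remaining boundary points of $X$ in $\P^1$---namely $\infty$ together with the punctures $a_j$ for which $k_j=0$---must instead be handled by the $\C$-coordinate, so I would take $f$ to be a rational function, holomorphic on $X$, with poles exactly at these points, making $|f(z)|\to\infty$ as $z$ approaches any of them. With $g$ and a first candidate $f$ in hand, a generic choice makes $(f',g')$ nonvanishing, so that $F$ is a proper holomorphic immersion in the correct homotopy class; the only thing left is to control its set of self-intersections.

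Reducing the self-intersection set is the heart of the matter, and is where I would bring in Wold's technique together with the density property of $\C\times\C^*$ (as exploited in \cite{Kutzschebauch2009} and \cite{ForstnericWold2009}, and already in the circular-domain case of \cite{Ritter2010}). Exhausting $X$ by compact, holomorphically convex sets and working inductively, I would approximate $F$ on each stage by compositions of holomorphic automorphisms of $\C\times\C^*$, using Anders\'en--Lempert theory to separate the finitely many sheets meeting over the current compactum while simultaneously pushing the image near the boundary off to the ends of $X$, so as to preserve properness in the limit. One can start the induction from \cite{Ritter2010}: a circular domain $\Omega=D_R\setminus\bigcup_j\overline{D_j}$ approximating $X$ carries, in the given homotopy class, a proper embedding into $\C\times\C^*$, which serves as the initial embedded piece before the boundary circles are driven toward the punctures and toward infinity.

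The dichotomy between countably and finitely many identified pairs I would explain through the null case, where the extra rigidity appears. A null-homotopic continuous map $X\to\C\times\C^*$ lifts through $\id\times\exp\colon\C\times\C\to\C\times\C^*$ to a map $\hat F=(\hat F_1,\hat F_2)\colon X\to\C^2$. Using Wold's embedding of finitely connected planar domains into $\C^2$ (\cite{Wold2006}, \cite{GlobevnikStensones1995}), arranged so that the first coordinate $\hat F_1$ blows up at every puncture and at $\infty$, the composite $F=(\hat F_1,e^{\hat F_2})$ is a proper holomorphic immersion in the null class, properness holding because $|\hat F_1|\to\infty$ at each end. Its identified pairs are exactly the $z_1\neq z_2$ with $\hat F_1(z_1)=\hat F_1(z_2)$ and $\hat F_2(z_1)-\hat F_2(z_2)\in 2\pi i\,\Z\setminus\{0\}$; for each fixed nonzero integer $m$ the corresponding coincidence set can be arranged to be discrete, hence at most countable, by injectivity of $\hat F$, and the union over all $m$ is again countable, which is the best one can hope for through the infinitely-sheeted covering $\exp$. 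In the non-null case no such lift exists, but the genuine escape of $g$ to the boundary of $\C^*$ at a puncture with $k_j\neq0$ provides the room the Anders\'en--Lempert step needs to remove all but finitely many double points, yielding the sharper conclusion (2).

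The step I expect to be the main obstacle is securing properness at the punctures. This is precisely the difficulty flagged in the introduction: Wold's boundary-pushing procedure controls behaviour at a boundary approached along the ends of the surface and does not, on its own, force the map to blow up at an isolated boundary point. The delicate point is therefore to run the inductive automorphism construction so that, near each zero-winding puncture, the $\C$-coordinate is driven to infinity without reintroducing self-intersections or disturbing the winding data encoded in $g$, and then to verify that the limit is a well-defined proper immersion on all of $X$ rather than only on a circular-domain exhaustion.
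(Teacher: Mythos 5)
Your setup for the non-null case --- $g$ rational realising the winding numbers and forcing properness at the punctures with $k_j\neq 0$, $f=p/q$ with poles at the zero-winding punctures and at $\infty$, and a generic adjustment to make $(f,g)$ an immersion --- coincides with the paper's construction. But the theorem's actual content is the bound on the number of identified pairs, and there your argument has a genuine gap. You defer the reduction of self-intersections to an Anders\'en--Lempert/Wold-style induction by automorphisms of $\C\times\C^*$, which is (a) never carried out, and (b) exactly the machinery that the paper's introduction explains does \emph{not} work here, because the boundary-pushing procedure cannot preserve properness at isolated boundary points --- an obstruction you yourself concede in your final paragraph is ``the main obstacle.'' A plan whose key step is acknowledged to be the unresolved difficulty is not a proof. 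The paper avoids automorphisms entirely: it shows that if $(f,g)$ identifies infinitely many pairs, then the coincidence curves $\{f(x)=f(y)\}$ and $\{g(x)=g(y)\}$ in $\P\times\P$ share an irreducible component off the diagonal, forcing all but finitely many $x\in X$ to have a partner $y\neq x$; a small perturbation of the coefficients of $p$ making $f$ injective on a single generic fibre $g^{-1}(a)$ then produces infinitely many points with no partner, a contradiction. This elementary algebraic argument is the missing heart of part (2), and nothing in your proposal substitutes for it.

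The null case has a parallel problem. You propose $F=(\hat F_1,e^{\hat F_2})$ with $(\hat F_1,\hat F_2)$ a Wold embedding of $X$ into $\C^2$ ``arranged so that $\hat F_1$ blows up at every puncture and at $\infty$''; Wold's theorem gives no such arrangement (properness of $(\hat F_1,\hat F_2)$ only forces $\lvert\hat F_1\rvert+\lvert\hat F_2\rvert\to\infty$, and $e^{\hat F_2}$ can stay in a compact subset of $\C^*$ while $\hat F_2\to\infty$), and your claim that each coincidence set $\{\hat F_1(x)=\hat F_1(y),\ \hat F_2(x)-\hat F_2(y)=2\pi i m\}$ ``can be arranged to be discrete'' does not follow from injectivity of $\hat F$, since these are intersections of analytic curves in $X\times X$ that could share a component. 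The paper sidesteps all of this with the explicit map $\bigl((z-c)^{n+1}/(z-a_1)\cdots(z-a_n),\,e^z\bigr)$: the second coordinate identifies $x$ and $y$ only when $y=x+2\pi i k$, and for each $k\neq 0$ the rational function $f(z)-f(z+2\pi i k)$ is not identically zero, hence has finitely many zeros, giving countability at once. Your instinct to route everything through the heavy embedding machinery obscures that both parts of this theorem are provable by explicit rational and exponential formulas plus elementary algebra.
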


\begin{proof}
In the case $n=0$ the existence of an embedding $X = \C\to\C\times\C^*$ is trivial, so assume that $n\ge 1$. Let the homotopy class of maps $X \to\C\times\C^*$ be determined by the winding numbers $k_j \in \Z$ about the punctures $a_j$, $j = 1,\dots,n$.

Suppose first that we have the null homotopy class, that is, $k_j = 0$ for all $j = 1,\dots,n$. Define
\[
	\psi(z) = \left({(z-c)^{n+1}}/{(z-a_1)\cdots(z-a_n)},e^z\right)\,,
\]	
where $c \in X$ is arbitrary. Then $\psi : X \to \C\times\C^*$ is a proper holomorphic immersion of $X$ into $\C\times\C^*$. Now $\psi$ factors through the null-homotopic map $\C\times\C \to \C\times\C^*$, $(w_1,w_2)\mapsto(w_1,e^{w_2})$, making $\psi$ null-homotopic. We now show that $\psi(x) = \psi(y)$ for at most countably many pairs of distinct points $x, y \in X$.

We write
\[
	f(z) = (z-c)^{n+1}/(z-a_1)\cdots(z-a_n)\,,
\]
so that $\psi(z) = (f(z),e^z)$. The second component of $\psi$ identifies the distinct points $x,y \in X$ if and only if $y = x+2\pi i k$ for some $k \in \Z^* = \Z\setminus\{0\}$. However, for each $k \in\Z^*$, the rational function $f(z) - f(z + 2\pi i k)$ is not identically zero (for otherwise the fibres of $f$ would be infinite, whereas $f$ is not constant), and therefore the equation $f(z) - f(z + 2\pi i k) = 0$ has only finitely many solutions $z \in X$. Considering all $k \in \Z^*$, it follows that $\psi$ identifies at most countably many pairs of distinct points in $X$.

Now suppose that the given homotopy class of maps is non-null. Order the punctures $a_j$ so that for some $r$ satisfying $1 \le r \le n$, we have $k_j \neq 0$ for $j= 1,\dots,r$, and $k_i = 0$ for $i = r+1,\dots,n$. Let
\[
	g(z) = (z-a_1)^{k_1}\cdots(z-a_r)^{k_r}\,.
\]
When $r=n$, the map $\psi(z) = (z, g(z))$ is an embedding of $X$ into $\C\times\C^*$ in the given homotopy class, so assume $r < n$.
If we now let $f = p/q$, where
\[
	q(z) = (z-a_{r+1})\cdots(z-a_n)
\]
and $p\in\C[z]$ is a polynomial with $\deg(p) > \deg(q) \ge 1$ that does not vanish at the zeros of $q$, then $\psi = (f,g) : X \to \C\times\C^*$ is a proper holomorphic map in the given homotopy class. It remains to be shown that with $q$ fixed as above, $p$ can be chosen so that $\psi$ is an immersion that identifies at most finitely many pairs of distinct points in $X$.

We first show that $p$ can be chosen so that $\psi = (f,g) = (p/q,g)$ is an immersion of $X$. Let $z_1,\dots,z_m$ be the critical points of $g$ in $X$. We have $f' = (p'q-pq')/q^2$, and since $q$ does not vanish on $X$ we must ensure that $p'(z_j)q(z_j) \neq p(z_j)q'(z_j)$ for $j = 1,\dots,m$. We begin by choosing any $p \in \C[z]$ with $\deg(p) > \deg(q)$ such that $p'$ is non-zero at each point $z_1,\dots,z_m$. By adding a generic constant to $p$ we can ensure both that $p$ and $q$ have no common zeros and that $f'(z_j)\neq 0$ for all $j=1,\dots,m$, so that $\psi$ is an immersion of $X$.

We now show how to further choose $p$ so that $\psi(x) = \psi(y)$ for at most finitely many pairs of distinct points $x,y \in X$. Begin by considering $f$ and $g$ as holomorphic maps from $\P$ to $\P$. Let $\pi_1, \pi_2 : \P\times\P \to \P$ denote the projection onto the first and second component of $\P\times\P$, respectively.
Form $Y_1$ as the pullback of $f$ by itself,
\[
	\xymatrix{Y_1\ar[d]^{\pi_1} \ar[r]^{\pi_2} &\P\ar[d]^f\\
		\P \ar[r]^f &\P}
\]	    
so that $Y_1$ is the algebraic curve in $\P\times\P$ given by
\[
	Y_1 = \{(x,y) \in \P\times\P : f(x) = f(y)\}\,.
\]
Similarly, let $Y_2$ be the algebraic curve in $\P\times\P$ given as the pullback of $g$ by itself,
\[
	Y_2 = \{(x,y) \in \P\times\P : g(x) = g(y)\}\,.
\]
Note that both $Y_1$ and $Y_2$ share the diagonal $\Delta \subset \P\times\P$ as an irreducible component.

Now suppose that $(f(x), g(x)) = (f(y),g(y))$ for infinitely many pairs of distinct points $x,y \in \P$. This implies that $Y_1\setminus \Delta$ intersects $Y_2\setminus \Delta$ in infinitely many points of $\P\times\P$. By the algebraicity of $Y_1$ and $Y_2$ it follows that the closures $\overline{Y_1\setminus \Delta}$ and $\overline{Y_2 \setminus \Delta}$ in $\P\times\P$ have an irreducible component $Z$ in common.

Consider the projection $\pi_1\rvert_Z : Z\to\P$. Note that $\pi_1\rvert_Z$ cannot be constant, as then $Z = \{a\}\times\P$ for some $a \in \P$, which in turn implies that $f$ and $g$ are constant. It follows that $\pi_1\rvert_Z$ must be surjective, and since $Z\cap\Delta$ is finite we see that for all but finitely many points $x \in \P$ there exists $y\in \P$, $y\neq x$, such that $(f(x),g(x)) = (f(y),g(y))$. By considering $Z \cap (X\times X)$ it follows that for all but finitely many points $x \in X$ there exists $y \in X$, $y\neq x$, such that $(f(x),g(x)) = (f(y),g(y))$.

Let $a$ be a regular value of $g$ such that the fibre $g^{-1}(a)$ is contained in $X$. For ease of exposition, we may also assume that $f(x)$ is a regular value of $f$ for all $x \in g^{-1}(a)$. Now suppose that $f$ is injective on $g^{-1}(a)$ and let $x_1 \in g^{-1}(a)$. Then for every point $x$ sufficiently close to $x_1$, $f$ remains injective on the fibre $g^{-1}(g(x))$. It follows that for infinitely many points $x \in X$ there is no $y \in X$, $y \neq x$, such that $(f(x),g(x)) = (f(y),g(y))$, and therefore that $\psi(x) = \psi(y)$ for at most finitely many pairs of distinct points $x,y \in X$.

Finally, in the event that $f$ is not injective on $g^{-1}(a)$, a small perturbation to the coefficients of $p$ will give the injectivity of $f$ on $g^{-1}(a)$ without destroying any of the existing properties of $f$. To see this, first note that for small perturbations, $p$ continues to be non-vanishing at the zeros of $q$, so $p$ and $q$ still have no common factors. Similarly, it remains true that $f'(z)\neq 0$ at each critical point $z$ of $g$. Since we do not perturb $q$, the map $\psi = (p/q, g)$ continues to be a proper immersion of $X$ into $\C\times\C^*$. Now let $x_1, x_2 \in g^{-1}(a)$, $x_1 \neq x_2$, and suppose $f(x_1) \neq f(x_2)$. This property is clearly preserved for small perturbations of $p$. On the other hand, suppose that $f(x_1) = f(x_2) = c$, so that $x_1$ and $x_2$ are simple zeros of the polynomial $p-cq$, since $a$ was chosen earlier to ensure $c$ is a regular value of $f$. Now, the map from the zeros $(y_1,\dots,y_m)$ of a monic polynomial of degree $m$ to its non-leading coefficients (given by the $m$ elementary symmetric polynomials in $y_1,\dots,y_m$) is continuous, so there exists a small perturbation to the non-leading coefficients of $p-cq$ that perturbs the zero previously at $x_1$ away from that point by a small amount, while keeping $x_2$ and all the other zeros of $p-cq$ fixed. Hence $x_1$ is no longer a zero of $p-cq$ and now $f(x_1) \neq f(x_2)$. Since $\deg p > \deg q$, the desired pertubation to the coefficients of $p-cq$ can be done by perturbing the coefficients of $p$ while keeping $q$ fixed. Repeating this procedure a finite number of times ensures that $f$ is injective on $g^{-1}(a)$.
\end{proof}

Given a proper holomorphic immersion $\psi : X \to \C\times\C^*$ of a finitely punctured plane $X$ provided by Theorem~\ref{t:puncturedplane}, the image $\psi(X)$ is of course a closed analytic subvariety of $\C\times\C^*$ with at most either countably many or finitely many singular points, depending on the homotopy class of $\psi$. It seems likely that Theorem~\ref{t:puncturedplane} could be improved to give $\psi$ such that at each singular point of $\psi(X)$ there are only two locally irreducible components that meet transversally, but we do not pursue this development here.

We are able to produce an embedding in certain homotopy classes of maps $X \to \C\times\C^*$.

\begin{theorem}\label{t:embeddingpuncturedplane}
Let $a_1,\dots,a_n \in \C$, $n \ge 1$, be distinct points, and let $X = \C\setminus\{a_1,\dots,a_n\}$. 
Let a homotopy class of continuous maps $X \to \C\times\C^*$ be given with associated winding numbers $k_j$ about the punctures $a_j$, $j=1,\dots,n$. Suppose one of the following holds.
\begin{enumerate}
\item $k_j \neq 0$ for $j = 1,\dots,n$.
\item $n \ge 2$, with $k_n = 0$, $k_j\neq 0$ for $j = 1,\dots,n-1$, and $k_1 + \cdots+k_{n-1} \neq 0$.
\item $n \ge 3$, with $k_j = 0$ for $j = 3,\dots,n$, $k_1 = 1$, and $k_2 = -1$.
\item $n \ge 3$, with $k_j = 0$ for $j = 2,\dots,n$, and $k_1 = \pm 1$.
\end{enumerate}
Then the given homotopy class contains an embedding $X \to \C\times\C^*$.
\end{theorem}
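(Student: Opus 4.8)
The plan is to prove injectivity of $\psi=(f,g)$ in each case by reducing it, wherever possible, to the injectivity of a single coordinate, and to meet the real difficulty only in case~(2). In case~(1) every winding number is nonzero, so in the notation of the proof of Theorem~\ref{t:puncturedplane} we have $r=n$ and $\psi(z)=(z,g(z))$ is already an embedding, its first coordinate being injective. In cases~(3) and~(4) the prescribed winding numbers collapse $g$ to a single linear-fractional or affine factor: in case~(3) we get $g(z)=(z-a_1)/(z-a_2)$, a M\"obius transformation, and in case~(4) we get $g(z)=(z-a_1)^{\pm 1}$, which is affine or M\"obius. In either situation $g$ is injective on $\P$, hence on $X$, and $g'$ vanishes nowhere, so for the map $\psi=(f,g)$ supplied by Theorem~\ref{t:puncturedplane} the second coordinate alone already makes $\psi$ an injective immersion; since that $\psi$ is proper, it is an embedding. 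One checks directly that $g$ still carries $X$ into $\C^*$ and that properness persists, the zero and pole of $g$ sitting at the omitted points while the remaining punctures and the point at infinity are handled by the poles of $f$ and by $\deg p>\deg q$.

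Case~(2) is the substance of the theorem. Here $r=n-1$, $q(z)=z-a_n$, $g(z)=(z-a_1)^{k_1}\cdots(z-a_{n-1})^{k_{n-1}}$, and the hypothesis $s:=k_1+\cdots+k_{n-1}\neq 0$ says exactly that $g(\infty)\in\{0,\infty\}$. I would begin with a proper immersion $\psi=(f,g)$ as produced by Theorem~\ref{t:puncturedplane} and then refine the choice of $p$ so that $\psi$ becomes injective on $X$; equivalently, so that $f(x)\neq f(y)$ whenever $g(x)=g(y)$ for distinct $x,y\in X$. I would first fix $q$ and take the coefficients of $p$ generic so that, as there, $Y_1$ and $Y_2$ share no component other than $\Delta$, whence $\psi$ is birational onto its image curve $C\subset\P\times\P$. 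The singularities of $C$ then arise only from coincidence pairs $\psi(x)=\psi(y)$, $x\neq y$, and from the ramification points of $\psi$; by the immersion property on $X$ the latter all lie over $B=\{a_1,\dots,a_n,\infty\}$, so the whole task is to ensure that no coincidence pair lies in $X\times X$.

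The decisive role of $s\neq 0$ is to anchor the coincidence correspondence $\overline{Y_2\setminus\Delta}$ to the boundary locus determined by $B$. Indeed, if $(x,y)\in Y_2$ with $x\to\infty$, then $g(y)=g(x)\to g(\infty)\in\{0,\infty\}$, which drives $y$ to a zero or pole of $g$, hence to a point of $B$; symmetrically for $y\to\infty$. So every branch of $\overline{Y_2\setminus\Delta}$ running to infinity terminates over $B$, and the poles of the rational function $f(x)-f(y)$ on $\overline{Y_2\setminus\Delta}$ likewise sit over $\{a_n,\infty\}\subset B$. This anchoring is what lets one herd the coincidences of $f$ on fibres of $g$ out to the boundary. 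The mechanism is to spend the singularity budget of $C$, namely its arithmetic genus $(\deg f-1)(\deg g-1)$, entirely over $B$: I would arrange $f'$ to vanish at each ramification point of $g$ lying in $B$ while keeping $f'\neq 0$ at the critical points of $g$ in $X$, creating boundary cusps that absorb the budget, exactly as in the model case $g(z)=(z-a_1)^2$, where the single predicted node is absorbed by the cusp forced at the puncture $a_1$ by the condition $f'(a_1)=0$. Any coincidence pair still surviving over $X\times X$ is then cleared by the perturbation argument closing the proof of Theorem~\ref{t:puncturedplane}: a simple zero of $p-cq$ at an unwanted point is nudged off it by a small change of the non-leading coefficients of $p$, preserving $q$, properness, the immersion property, and the coincidences already parked on $B$.

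The step I expect to be the main obstacle is this last accounting: confirming that the entire singularity budget can be parked over $B$, so that the finitely many interior coincidence pairs are cleared simultaneously and none are created anew. This is precisely where $s\neq 0$ is indispensable, since if $s=0$ then $g(\infty)\in\C^*$ and a generic fibre of $g$ meets $X$ in $\deg g$ points with no boundary point at hand to soak up the forced self-intersections, so the budget could not be confined to $B$. It is this constraint that singles out the four specific configurations of winding numbers in the statement.
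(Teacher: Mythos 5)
Your treatment of cases~(1), (3) and~(4) is correct and matches the paper's strategy: in each of those cases one coordinate of $\psi$ is a M\"obius or affine map, hence an injective immersion of $X$, and properness is supplied jointly by the two coordinates. (The paper writes down explicit first components rather than invoking the $f=p/q$ from Theorem~\ref{t:puncturedplane}, but that difference is immaterial.)

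Case~(2), however, contains a genuine gap, and the elaborate machinery you deploy there is both incomplete and unnecessary. The incompleteness: your argument rests on ``spending the singularity budget'' $(\deg f-1)(\deg g-1)$ of the image curve entirely over the boundary locus $B$ by forcing cusps at the ramification points of $g$ in $B$, and you yourself flag the accounting that would make this work as ``the main obstacle.'' Nothing in the proposal establishes that the budget can in fact be parked over $B$, that the perturbation clearing interior coincidence pairs does not create new ones, or that the process terminates with an injective map; as written this is a heuristic, not a proof. The unnecessity: you have locked yourself into $f=p/q$ with $\deg p>\deg q$, which forces $\deg f\ge 2$ and hence a non-injective $f$, and you then fight to recover injectivity of the pair. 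But the only purpose of $\deg p>\deg q$ in Theorem~\ref{t:puncturedplane} was to make $f$ proper at $\infty$. In case~(2) the hypothesis $k_1+\cdots+k_{n-1}\neq 0$ means precisely that $g(z)=(z-a_1)^{k_1}\cdots(z-a_{n-1})^{k_{n-1}}$ already tends to $0$ or $\infty$ as $z\to\infty$, so the first coordinate need not be proper at $\infty$ at all; it only needs a pole at $a_n$. Taking $f(z)=1/(z-a_n)$, which is injective on all of $\P$, makes $\psi=(f,g)$ an injective immersion immediately, with properness at $a_1,\dots,a_{n-1}$ and at $\infty$ coming from $g$ and properness at $a_n$ from $f$. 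This is the paper's one-line solution, and it also corrects your reading of the hypothesis: $k_1+\cdots+k_{n-1}\neq 0$ is there to rescue properness at infinity once the first coordinate is demoted to a M\"obius map, not to ``anchor the coincidence correspondence.''
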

\begin{proof}
Case~(1) was already covered in the proof of Theorem~\ref{t:puncturedplane} by taking
\[
	\psi(z) = (z, (z-a_1)^{k_1}\cdots(z-a_n)^{k_n})\,.
\]
For case~(2), we take
\[
	\psi(z) = (1/(z-a_n),(z-a_1)^{k_1}\cdots(z-a_{n-1})^{k_{n-1}})\,.
\]
For case~(3), we take 
\[
	\psi(z) = ((z-c)^{n-1}/(z-a_3)\cdots(z-a_{n}), (z-a_1)/(z-a_2))\,,
\]
where $c\in X$ is arbitrary. Finally, in case~(4) we take 
\[
	\psi(z) = (1/(z-a_2)\cdots(z-a_{n}),(z-a_1)^{\pm1})\,.
\]
In each of these cases one component of the map $\psi$ is an injective immersion of $X$, and $\psi$ is also proper, making $\psi$ an embedding with the required winding numbers.
\end{proof}

Given a finitely punctured plane, the general problem of finding an embedding into $\C\times\C^*$ within a given homotopy class separates into two quite different cases, depending on whether the homotopy class is null. For example, note that obviously there is an embedding in each non-null homotopy class of maps $\C^* \to \C\times\C^*$, but the existence or not of a null-homotopic embedding $\C^* \to \C\times\C^*$ is surprisingly difficult to establish. We present some partial results on this topic in Section~\ref{s:nullhomotopicembedding}. More generally, apart from the trivial case of $\C$ we do not know if there exists a finitely punctured plane that null-homotopically embeds into $\C\times\C^*$.

We are however aware of some finitely punctured planes $X$ and non-null homotopy classes of maps $X \to \C\times\C^*$ that contain an embedding with neither of its two components injective. For example, take $X = \C\setminus\{0,a,b\}$, where $a,b\in\C^*$ are distinct. Then $(1/(z-a)(z-b),z^2)$ is an embedding into $\C\times\C^*$, unless $a = -b$, in which case $(1/(z-a)(z+a)^2,z^2)$ is. Given a punctured plane $X$ and a homotopy class of maps $X\to\C\times\C^*$, the punctures with zero winding determine the location of the poles of the first component of any rational embedding in the homotopy class, while the punctures with non-zero winding essentially determine the second component. We therefore have considerable freedom in choosing both the numerator and the order of the poles of the first component, but it is unclear whether this is sufficient to permit in general the construction of an embedding $X \to \C\times\C^*$.

\section{Proper holomorphic immersions of all other finitely connected planar domains}\label{s:planardomains}

\noindent
We now suppose that $X\subset\C$ is a finitely connected planar domain with at least one boundary component not an isolated point. As mentioned in the introduction, by the Koebe uniformisation theorem, we may assume $X$ is the open unit disc from which a finite number of pairwise disjoint closed discs and isolated points have been removed. We call such domains \emph{punctured circular domains}, with the term \emph{circular domain} reserved for the case of no punctures.

\begin{definition}\label{d:puncturedcirculardomain}
A \emph{punctured circular domain} is a domain $X\subset \C$ consisting of the open unit disc $\D$ from which $m\ge0$ closed, pairwise disjoint discs and $n\ge0$ isolated points have been removed. Let the deleted discs have centres $c_i \in \D$ and radii $r_i > 0$, $i=1,\dots,m$, and let the deleted points be $a_1,\dots,a_n \in \D\setminus\bigcup\limits_{i=1}^m(c_i + r_i\overline\D)$. We have
\[
X = \D \setminus (\bigcup_{i=1}^m(c_i + r_i\overline\D) \cup \{a_1,\dots,a_n\})
\]
with constraints $r_i + r_j < \lvert c_i - c_j\rvert$ for $1\le i<j \le m$, and $r_i< 1-\lvert c_i \rvert$ for $i = 1,\dots,m$.
\end{definition}

Let $\pi_1$ and $\pi_2$ denote the projection maps of $\C\times\C^*$ onto the first and second component, respectively. For $r > 0$ define the open annulus
\[
	A_r = \{ z \in \C^* : 1/(r+1) < \lvert z \rvert < (r+1)\}\,.
\]
We call $P_r = r\D \times A_r$ the \emph{cylinder} of radius $r$.

The \emph{nice projection property} for a finite collection of smoothly embedded curves in $\C\times\C^*$ was given in \cite{Ritter2010}, based on a definition introduced in \cite{Kutzschebauch2009}. The definition in \cite{Ritter2010} concerns properties of the curves after they have been projected onto the $\C$-component of $\C\times\C^*$ using $\pi_1$, and in this paper we refer to it as the \emph{$\C$-nice projection property}. If, instead of projecting onto $\C$, we use $\pi_2$ to project the curves onto the $\C^*$-component, we have the following definition.

\begin{definition}\label{definition_niceprojectionproperty}
Let $\gamma_1, \dots,\gamma_m$ be pairwise disjoint, smoothly embedded curves in $\C\times\C^*$, where each $\gamma_i$ has domain either $[0,\infty)$ or $(-\infty, \infty)$. For $i=1,\dots,m$, let $\Gamma_i \subset \C\times\C^*$ be the image of $\gamma_i$ and set $\Gamma = \bigcup\limits_{i=1}^m \Gamma_i$. We say that the collection $\gamma_1,\dots,\gamma_m$ has the \emph{$\C^*$-nice projection property} if there is a holomorphic automorphism $\alpha \in \Aut(\C\times\C^*)$ such that, if $\beta_i = \alpha \circ \gamma_i$ and $\Gamma' = \alpha(\Gamma)$, the following conditions hold.
\begin{enumerate}
\item For every compact subset $K \subset \C^*$ and every $i = 1,\dots,m$, there exists $s > 0$ such that $\pi_2(\beta_i(t)) \notin K$ for $\lvert t \rvert > s$.
\item There exists $M > 0$ such that for all $r \ge M$:
\begin{enumerate}
\item $\C^*\setminus(\pi_2(\Gamma')\cup \overline A_r)$ does not contain any relatively compact connected components.
\item $\pi_2$ is injective on $\Gamma' \setminus \pi_2^{-1}(A_r)$.
\end{enumerate}
\end{enumerate}
\end{definition}

The following result is analogous to the main technical lemma in \cite{Ritter2010} (Lemma~4), adapted to a family of curves with the $\C^*$-nice projection property instead of the $\C$-nice projection property. 

\begin{lemma}\label{l:technical}
Equip $\C\times\C^*$ with a Riemannian distance function $d$. Let $K \subset \C\times\C^*$ be an $\O(\C\times\C^*)$-convex compact set and let $\gamma_1,\dots,\gamma_m$ be pairwise disjoint, smoothly embedded curves in $\C\times\C^*$ satisfying the $\C^*$-nice projection property. Let $\Gamma_i$ be the image of $\gamma_i$, $i=1,\dots,m$, and set $\Gamma = \bigcup\limits_{i=1}^m\Gamma_i$. Suppose that $\Gamma \cap K = \varnothing$. Then, given $r>0$ and $\epsilon > 0$, there exists $\phi \in \Aut(\C\times\C^*)$ such that the following conditions are satisfied.
\begin{enumerate}
\item[(a)] $\sup\limits_{\zeta \in K}d(\phi(\zeta),\zeta) < \epsilon$.
\item[(b)] $\phi(\Gamma) \subset \C\times\C^* \setminus \overline{P}_r$.
\item[(c)] $\phi$ is homotopic to the identity map.
\end{enumerate}
\end{lemma}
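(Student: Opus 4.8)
\section*{Proof proposal}

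The plan is to follow the proof of the analogous Lemma~4 in \cite{Ritter2010}, interchanging throughout the roles of the two projections $\pi_1$ and $\pi_2$, and to build $\phi$ as a finite composition of shears of the two standard types $(z,w)\mapsto(z+a(w),w)$ with $a\in\O(\C^*)$ and $(z,w)\mapsto(z,w\,e^{b(z)})$ with $b\in\O(\C)$. Each such shear is homotopic to the identity (degenerate the defining function to $0$), so a composition of them automatically satisfies condition~(c), and the problem reduces to arranging (a) and (b). As a first step I would pass to the coordinates supplied by the $\C^*$-nice projection property: put $\beta_i=\alpha\circ\gamma_i$ and replace $K$ by $\alpha(K)$, which is again $\O(\C\times\C^*)$-convex and disjoint from $\Gamma'=\alpha(\Gamma)$. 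In these coordinates the curves satisfy conditions~(1)--(2) directly, so we may assume from now on that $\alpha=\id$, and we may also assume $r\ge M$.

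The geometric picture is then the following. Condition~(1), applied with the compact set $\overline A_r\subset\C^*$, shows that each curve leaves $\pi_2^{-1}(\overline A_r)$ for $\lvert t\rvert$ large, so $\Gamma$ splits into two \emph{tails}, on which $\pi_2$ takes values outside $\overline A_r$ and which therefore already lie off $\overline P_r=r\overline\D\times\overline A_r$, and a compact \emph{middle} piece $\Gamma^{\mathrm{mid}}=\Gamma\cap\pi_2^{-1}(\overline A_r)$. Only $\Gamma^{\mathrm{mid}}$ needs to be moved, and it suffices to push it into the region $\{\lvert z\rvert>r\}$ in the first coordinate, while keeping the map close to the identity on $K$ and keeping the tails outside $\pi_2^{-1}(\overline A_r)$. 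On the tails, where by condition~(2b) the projection $\pi_2$ is injective, $\Gamma$ is a graph over its projection, so shears of the first type keep the tails under control without disturbing the middle.

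The main construction is to realise the outward push of $\Gamma^{\mathrm{mid}}$ by the Andersén--Lempert theorem, which applies on $\C\times\C^*$ since the two shear families above suffice to approximate, uniformly on $\O(\C\times\C^*)$-convex compact sets, any isotopy of biholomorphisms connected to the identity through maps with $\O$-convex images, as used in \cite{Ritter2010} and \cite{Kutzschebauch2009}. Concretely, I would produce a holomorphic isotopy $\Phi_t$, with $\Phi_0=\id$ and $\Phi_t$ equal to the identity on a neighbourhood of $K$ for all $t$, whose time-one map carries $\Gamma^{\mathrm{mid}}$ into $\{\lvert z\rvert>r\}$, and then approximate $\Phi_1$ uniformly on $K\cup\Gamma^{\mathrm{mid}}$ by an automorphism $\phi$ obtained as a composition of shears; approximating closely enough on $K$ gives~(a), and pushing slightly past $\{\lvert z\rvert>r\}$ gives~(b) for all of $\Gamma$. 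For the theorem to apply one must check that the intermediate images remain $\O(\C\times\C^*)$-convex, and this is exactly where conditions~(2a) and~(2b) enter: (2a) guarantees that $\pi_2(\Gamma)\cup\overline A_r$ has no relatively compact complementary components in $\C^*$, so the sets built from the curves stay Runge, and (2b) supplies the injectivity needed to define the pushing holomorphically on the tails.

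The step I expect to be the main obstacle is precisely the outward push of the compact middle $\Gamma^{\mathrm{mid}}$. A naive attempt with a single first-type shear $(z,w)\mapsto(z+a(w),w)$ fails whenever $\pi_2(K)$ and $\pi_2(\Gamma^{\mathrm{mid}})$ overlap in $\C^*$, since then $a(w)$ would have to be both small and large on the same values of $w$; one therefore cannot avoid a genuinely $z$-dependent deformation, and the real work is to build the isotopy $\Phi_t$ together with the $\O$-convexity control that legitimises its approximation by automorphisms. Managing this while simultaneously preserving disjointness from $K$ and the near-identity estimate there is the delicate heart of the argument, and it is carried out as in \cite{Ritter2010}, with $\pi_2$ in place of $\pi_1$.
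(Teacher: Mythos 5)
Your overall strategy is the paper's: reduce via the automorphism from Definition~\ref{definition_niceprojectionproperty} to the case where conditions~(1)--(2) hold directly for the curves, take $r\ge M$, split $\Gamma$ into the compact middle $\widetilde\Gamma=\Gamma\cap(\C\times\overline A_r)$ and the non-compact tails, push the middle out of $\overline P_r$ by an Anders\'en--Lempert automorphism that approximates the identity on $K$, and use first-type shears together with (2a) and (2b) to deal with the tails. This is exactly the proof of Lemma~4 of \cite{Ritter2010} with the roles of $\pi_1$ and $\pi_2$ interchanged, which is what the paper does.

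There is, however, one concrete gap in how you assemble the pieces. You claim that approximating the time-one map of your isotopy uniformly on $K\cup\Gamma^{\mathrm{mid}}$ by an automorphism and ``pushing slightly past $\{\lvert z\rvert>r\}$'' gives (b) \emph{for all of} $\Gamma$. It does not: the approximation controls the automorphism, call it $\alpha$, only on the compact set $K\cup\Gamma^{\mathrm{mid}}$, and on the non-compact tails $\alpha$ is completely uncontrolled and may carry parts of $\Gamma\setminus\widetilde\Gamma$ into $\overline P_r$; that the tails start outside $\overline P_r$ is no protection. This is precisely why the paper's automorphism is a composition $\phi=\alpha\circ\beta$: one must precompose with a shear $\beta(z,w)=(z+g(w),w)$, $g\in\O(\C^*)$, which is close to $0$ on $\overline A_r$ (so it disturbs neither $K$ nor the middle) and which moves the tails far out in the $z$-direction so that they avoid the compact set $\alpha^{-1}(\overline P_r)$ --- note that the set to be avoided is $\alpha^{-1}(\overline P_r)$, not $\overline P_r$ itself. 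Constructing $g$ is where (2a) and (2b) actually enter: (2b) makes the tails a graph over $\pi_2(\Gamma)\setminus\overline A_r$, so a single-valued continuous pushing function $f$ can be defined on $\overline A_r\cup\pi_2(\Gamma)$ (zero on $\overline A_r$, and on the tails a translation along paths in the fibres $\C\times\{w\}$ chosen to avoid $\alpha^{-1}(\overline P_r)$), while (2a) is the topological hypothesis needed for the Mergelyan--Bishop theorem to approximate $f$ by $g\in\O(\C^*)$. You instead locate (2a)--(2b) in the $\O(\C\times\C^*)$-convexity bookkeeping for the Anders\'en--Lempert step and identify the push of the compact middle as the delicate heart; in fact that push is the routine part, and the treatment of the tails after $\alpha$ has been fixed is where the real work of the lemma lies.
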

\begin{proof}
The proof is similar to that of \cite[Lemma 4]{Ritter2010}, the primary difference being that we exchange the roles of $\C$ and $\C^*$ in the construction of $\phi$. This entails numerous minor modifications. We give the necessary details below, making reference to the proof in \cite{Ritter2010} where expedient.

As in \cite{Ritter2010}, we may assume the automorphism from Definition~\ref{definition_niceprojectionproperty} has already been applied, so that the conditions of the $\C^*$-nice projection property hold directly for the curves $\gamma_i$, $i=1,\dots,m$. The proof that condition~(c) can be ensured is identical to the argument in \cite{Ritter2010}.

Let $K'\subset\C\times\C^*$ be a slightly larger $\O(\C\times\C^*)$-convex compact set containing $K$ in its interior such that $K' \cap \Gamma = \varnothing$ still holds, and shrink $\epsilon$ so that $\epsilon/2 < d(K,\C\times\C^*\setminus K')$. We also assume that $r \ge M$, where $M$ is determined by the $\C^*$-nice projection property for $\gamma_1,\dots,\gamma_m$, and if necessary we take $r$ larger so that $K' \subset \C\times A_r$, and $\gamma_i(0)\in\C\times A_r$ for $i = 1,\dots,m$. Set $\widetilde\Gamma = \Gamma \cap (\C\times\overline{A}_r) = (\pi_2\rvert_\Gamma)^{-1}(\overline{A}_r)$, which is compact by condition~(1) in Definition~\ref{definition_niceprojectionproperty}, and in fact has precisely $m$ connected components $\widetilde\Gamma_1,\dots,\widetilde\Gamma_m$, each $\widetilde\Gamma_i = \Gamma_i \cap (\C\times\overline{A}_r)$ a smoothly embedded compact curve. Following \cite{Ritter2010}, we construct an isotopy of injective holomorphic maps on a neighbourhood of $K' \cup \widetilde\Gamma$ and use the Andersen-Lempert theorem \cite[Thm.\ ~2]{Ritter2010} to obtain $\alpha \in \Aut(\C\times\C^*)$ satisfying:
\begin{enumerate}
\item[(a)] $\sup\limits_{\zeta \in K'}d(\alpha(\zeta),\zeta) < \epsilon/2$.
\item[(b)] $\alpha(\widetilde\Gamma) \subset \C\times\C^*\setminus\overline P_r$.
\end{enumerate}

The automorphism $\alpha$ moves all of $\widetilde\Gamma$ outside of $\overline P_r$, but may move parts of the set $\Gamma\setminus\widetilde\Gamma$ into $\overline P_r$. We let $\Gamma_r = \{\zeta \in \Gamma:\alpha(\zeta)\in\overline P_r\} = \Gamma \cap \alpha^{-1}(\overline P_r)$. By construction, $\pi_2(\Gamma_r) \subset \pi_2(\Gamma) \setminus \overline A_r$. Recall that $r$ was chosen so that $\C^*\setminus(\pi_2(\Gamma)\cup\overline A_r)$ has no relatively compact connected components and such that $\pi_2$ is injective on $\Gamma$ outside of $\C\times A_r$. We construct $\beta \in \Aut(\C\times\C^*)$ that approximates the identity on $\C\times\overline A_r$ and moves $\Gamma\setminus\widetilde\Gamma$ so as to avoid $\alpha^{-1}(\overline P_r)$. The automorphism will have the form $\beta(z,w) = (z+g(w),w)$, where $(z,w) \in \C\times\C^*$ and $g \in \O(\C^*)$. We first construct a continuous map $\widetilde \beta(z,w) = (z +f(w),w)$ from $\C\times(\overline A_r \cup \pi_2(\Gamma))$ to $\C\times\C^*$, where $f : \overline A_r\cup\pi_2(\Gamma) \to \C$ is continuous on $\overline A_r \cup \pi_2(\Gamma)$ and holomorphic on $A_r$, then approximate $f$ uniformly on an appropriate set by $g \in \O(\C^*)$.

Let $s>r$ be chosen so that $\alpha^{-1}(\overline P_r) \subset \overline P_s$. If $(z,w) \in \C\times\C^*$ with $w \notin A_s$, we clearly have $\beta(z,w) \notin \alpha^{-1}(\overline P_r)$ and also $\widetilde\beta(z,w)\notin \alpha^{-1}(\overline P_r)$. We set $f(w) = 0$ for $w \in \overline A_r$, so that $\widetilde \beta = \id$ on $\C\times\overline A_r$. We now show how to define $f$ on $\pi_2(\Gamma) \setminus \overline A_r$ so that $\widetilde \beta$ moves the set $\Gamma' = \Gamma \cap (\overline P_s \setminus(\C\times A_r))$ so as to avoid $\alpha^{-1}(\overline P_r)$.

We now assume each $\gamma_i$ has domain $[0,\infty)$ with $\gamma_i(0) \in \C\times A_r$ by breaking those curves with domains $(-\infty,\infty)$ into two components, as described in \cite{Ritter2010}. Let $k\ge m$ be the new total number of curves $\gamma_i$. For each $\gamma_i(t) = (z_i(t),w_i(t))$ we choose $t^i_0 > 0$ such that $w_i(t^i_0) \in \partial A_r$, with $w_i(t) \in \overline A_r$ for $t < t^i_0$ and $w_i(t) \in \C^* \setminus \overline A_r$ for $t > t^i_0$. Note that $\gamma_i(t^i_0) \notin \alpha^{-1}(\overline P_r)$.

Let $B_i = \sup\{\lvert z_i(t)\rvert : t \ge t^i_0 \mbox{ such that } w_i(t) \in \overline A_s\}$, and let $B = \max\limits_{i = 1,\dots,k}B_i$. Let $L_i = \C\times\{w_i(t^i_0)\}$ and $K_i = L_i \cap \alpha^{-1}(\overline P_r)$. By the injectivity of $\pi_2$ on $\Gamma \setminus (\C\times A_r)$, $L_1,\dots,L_k$, and hence $K_1\dots,K_k$, are all distinct. As $\alpha^{-1}(\overline P_r)$ is $\O(\C\times\C^*)$-convex, each $L_i\setminus K_i$ is connected and unbounded, so for each $i = 1,\dots,k$ we may choose a continuous path $c_i : [0,1] \to L_i\setminus K_i$ satisfying $c_i(0) = \gamma_i(t^i_0)$ and $c_i(1) \in L_i \setminus (T\overline\D\times\{w_i(t^i_0)\})$, where $T > s + \lvert z_i(t^i_0)\rvert + B$.

Now define $\tilde c_i : [0,1] \to \C$ by $\tilde c_i(t) = \pi_1(c_i(t)) - z_i(t^i_0)$, so that $c_i(t) = (z_i(t^i_0) + \tilde c_i(t),w_i(t^i_0))$. We have $\tilde c_i(0) = 0$ and $z_i(t^i_0) + \tilde c_i(1) \notin T\overline\D$. For sufficiently small $\delta > 0$, the curve $(z_i(t^i_0+\delta t) + \tilde c_i(t),w_i(t^i_0 + \delta t))$, $t \in [0,1]$, remains within $\C\times\C^* \setminus \alpha^{-1}(\overline P_r)$ and we still have $z_i(t^i_0 + \delta) + \tilde c_i(1) \notin T\overline\D$.

Define $f : \overline A_r \cup \pi_2(\Gamma) \to \C$ by 
\begin{equation*}
f = \left\{
	\begin{array}{ll}
		0 & \text{on } \overline{A}_r,\\
		\tilde{c}_i(t/\delta) & \text{at } w_i(t^i_0 + t) \text{ for } t \in [0,\delta], i = 1,\dots,k,\\
		\tilde{c}_i(1) & \text{at } w_i(t^i_0 + t) \text{ for } t > \delta, i = 1,\dots,k.
	\end{array} \right.
\end{equation*}

The choice of $T$ made earlier ensures that for all $t \ge t^i_0$ with $w_j(t) \in \overline A_s$ we have
\[
	\lvert z_i(t) + \tilde c_i(1)\rvert \ge \lvert \tilde c_i(1)\rvert - \lvert z_i(t)\rvert > s + B - \lvert z_i(t)\rvert \ge s\,,
\]
and therefore $z_i(t) + \tilde c_i(1) \notin s\D$ for all such $t$.

By the Mergelyan-Bishop theorem \cite{Bishop1958} there exists $g \in \O(\C^*)$ that approximates $f$ uniformly on $(\overline A_r\cup \pi_2(\Gamma))\cap \overline A_s$. By making the approximation sufficiently close we ensure that $\beta(\Gamma)\cap\alpha^{-1}(\overline P_r) = \varnothing$, and $\phi = \alpha \circ \beta$ is then the desired automorphism.
\end{proof} 

By a \emph{bordered Riemann surface} we mean a two-real-dimensional smooth manifold-with-boundary $\overline X$ (not necessarily compact), equipped with a complex structure on the interior compatible with the given smooth structure. Its boundary $\partial \overline X$ is thus a smooth one-dimensional manifold (again, not necessarily compact), namely a disjoint union of circles and lines. In \cite[Thm.~1]{Ritter2010}, a so-called Wold embedding theorem was proved for embeddings of certain bordered Riemann surfaces $\overline X$ into $\C\times\C^*$ such that the image of the boundary components satisfy the $\C$-nice projection property. The corresponding result also holds if the collection of boundary curve images instead satisfy the $\C^*$-nice projection property. In addition, the argument remains valid if we weaken the input to be only a proper holomorphic immersion of $\overline X$ satisfying certain properties as described below.

\begin{theorem}\label{t:Wold}
Let $X$ be an open Riemann surface and $K \subset X$ be a compact set. Suppose that $X$ is the interior of a bordered Riemann surface $\overline X$ whose boundary components are non-compact and finite in number. Let $\psi : \overline X \to \C\times\C^*$ be a proper holomorphic immersion satisfying the following conditions.
\begin{enumerate}
\item $\psi$ identifies at most finitely many pairs of distinct points in $X$.
\item $\psi$ is injective on $\partial \overline X$ and $\psi(\partial \overline X) \cap \psi(X) = \varnothing$.
\item $\psi(\partial\overline X)$ has either the $\C$- or the $\C^*$-nice projection property.
\end{enumerate}
Then there exists a proper holomorphic immersion $\sigma: X \to \C\times\C^*$ that identifies precisely the same pairs of points in $X$ as $\psi$, that approximates $\psi$ uniformly on $K$, and such that $\sigma$ is homotopic to $\psi\rvert_X$. Thus, if $\psi$ is an embedding, then so is $\sigma$.
\end{theorem}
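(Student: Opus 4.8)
The plan is to run the inductive scheme of the Wold embedding theorem \cite[Thm.~1]{Ritter2010}, substituting Lemma~\ref{l:technical} for its main technical lemma when $\psi(\partial\overline X)$ has the $\C^*$-nice projection property (and using \cite[Lemma~4]{Ritter2010} in the $\C$-nice case), while carrying the finitely many identified pairs along unchanged. After composing with the automorphism supplied by Definition~\ref{definition_niceprojectionproperty}, I may assume the nice projection property holds directly for the curves $\psi(\partial\overline X)$. I would then fix a compact exhaustion $K\subset L_1\subset L_2\subset\cdots$ of $X$ with $L_j\subset\operatorname{int}L_{j+1}$ and $\bigcup_j L_j=X$, chosen so that the finitely many pairs $(x_1,y_1),\dots,(x_p,y_p)$ of distinct points identified by $\psi$ all lie in $L_1\times L_1$, together with radii $r_j\to\infty$ and a summable sequence of tolerances $\epsilon_j>0$.

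The induction produces automorphisms $\phi_j\in\Aut(\C\times\C^*)$ and maps $\psi_j=\phi_j\circ\psi_{j-1}$, with $\psi_0=\psi$. Since each $\phi_j$ is a biholomorphism, both the nice projection property and the disjointness $\psi(\partial\overline X)\cap\psi(X)=\varnothing$ of condition~(2) are inherited by $\psi_{j-1}(\partial\overline X)$ and $\psi_{j-1}(X)$, the former because the nice projection property is preserved under post-composition with automorphisms. At stage $j$ I would enclose the image of the current core in an $\O(\C\times\C^*)$-convex compact set $\widehat K_j\supset\psi_{j-1}(L_j)$ that is disjoint from $\psi_{j-1}(\partial\overline X)$, and apply Lemma~\ref{l:technical} with this $K=\widehat K_j$ and $r=r_j$ to obtain $\phi_j$ that pushes $\psi_{j-1}(\partial\overline X)$ outside $\overline{P}_{r_j}$, satisfies $\sup_{\widehat K_j}d(\phi_j,\id)<\epsilon_j$, and is homotopic to the identity. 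Constructing $\widehat K_j$ from the nice projection property is the technical heart of the argument, and it is inherited essentially verbatim from \cite[Thm.~1]{Ritter2010}; the only change is that $\psi_{j-1}$ is an immersion rather than an embedding, but its self-intersections are finite in number and buried inside $\psi_{j-1}(L_1)$, so they do not interfere with the convexity construction.

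With the $\epsilon_j$ summable and each $\phi_j$ close to the identity on the enlarging cores, the $\psi_j$ converge uniformly on compact subsets of $X$ to a holomorphic map $\sigma$ that approximates $\psi$ on $K$. Matching the exhaustion of $X$ to the exhaustion of $\C\times\C^*$ by the cylinders $\overline{P}_{r_j}$ yields properness in the standard way: arranging that $\sigma(X\setminus L_j)$ avoids $\overline{P}_{r_{j-1}}$, one gets $\sigma^{-1}(\overline{P}_r)\subset L_j$ compact for every $r$. Each $\psi_j$ is an immersion, being a biholomorphism composed with $\psi$; choosing $\epsilon_j$ small enough to control first derivatives on $L_{j-1}$ keeps $d\sigma$ nonvanishing there, so $\sigma$ is an immersion. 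Finally $\sigma$ is homotopic to $\psi|_X$ because each $\phi_j$ is homotopic to the identity and the convergence is uniform on compacts.

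The step I expect to require the most care, and the genuinely new obstacle beyond \cite[Thm.~1]{Ritter2010}, is showing that $\sigma$ identifies \emph{precisely} the pairs $(x_i,y_i)$. That $\sigma(x_i)=\sigma(y_i)$ is automatic, since each biholomorphism $\phi_j$ preserves this equality exactly and it passes to the limit. For the converse I would note that $\psi_j(x)=\psi_j(y)$ if and only if $\psi(x)=\psi(y)$, so the coincidence set of every $\psi_j$ is the \emph{same} fixed finite set. Away from the pairs and the diagonal, the condition $\psi(x)\neq\psi(y)$ holds on each compact product $L_N\times L_N$ and is open, hence preserved under close enough approximation, and local injectivity near the diagonal follows from the immersion property. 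Near each $(x_i,y_i)$ I would argue by a Hurwitz-type principle: on a fixed small bidisc about $(x_i,y_i)$ the maps $G_j(x,y)=\psi_j(x)-\psi_j(y)$, read in a chart about the common value, vanish only at $(x_i,y_i)$ for every $j$, so any zero of the limit $\sigma(\cdot)-\sigma(\cdot)$ is a limit of zeros of the $G_j$ and must therefore equal $(x_i,y_i)$. The essential subtlety is that even tangential contact of the two local branches cannot spawn new identifications in the limit, precisely because each approximant already identifies that single point and nothing else nearby.
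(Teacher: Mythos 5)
Your proposal is correct and follows essentially the same route as the paper: both reduce the statement to the inductive construction of \cite[Thm.~1]{Ritter2010}, substituting Lemma~\ref{l:technical} for \cite[Lemma~4]{Ritter2010} in the $\C^*$-nice case, and both identify the same key point that the $\O(\C\times\C^*)$-convex exhaustion of the image survives the passage from an embedded surface to a properly immersed one with finitely many self-intersections (the paper credits this to the argument of \cite[Thm.~5.1]{ForstnericWold2009} and the properness of $\psi$ at the punctures, which your appeal to properness covers). Your closing discussion of why $\sigma$ identifies precisely the given pairs is more explicit than the paper's, but is the standard Hurwitz/quantitative-injectivity bookkeeping already implicit in the cited construction.
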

\begin{proof}
If $\psi(\partial \overline X)$ satisfies the $\C$-nice projection property, then by scaling in the $\C^*$-component we may assume that $\overline P_2\cap\partial\overline X = \varnothing$. If, on the other hand, $\psi(\partial \overline X)$ satisfies the $\C^*$-nice projection property, then we apply a translation in the $\C$-component to ensure $\overline P_2\cap\partial\overline X = \varnothing$. Then, in the case that $\psi$ is an embedding of $\overline X$ into $\C\times\C^*$, the proof of Theorem 1 in \cite{Ritter2010} works without modification for both the $\C$- and $\C^*$-nice projection properties, since it only makes use of Lemma~4 in \cite{Ritter2010} and Lemma~\ref{l:technical} in the current paper that hold for each nice projection property, respectively.

Suppose now that $\psi : \overline X \to \C\times\C^*$ is a proper holomorphic immersion satisfying properties~(1)--(3). The image $\psi(X)$ is then a 1-dimensional closed analytic subvariety of $\C\times\C^* \setminus \psi(\partial \overline X)$ with finitely many singular points. The proof of Theorem 5.1 in \cite{ForstnericWold2009}, which follows an argument detailed in \cite[Prop.~3.1]{Wold2006a}, continues to hold even in the case when $X$ has punctures, by virtue of the properness of $\psi$. That is, $\psi(X)$ admits an exhaustion by $\O(\C\times\C^*)$-convex compact sets satisfying the conclusion of Lemma 5 in \cite{Ritter2010}. The construction given in the proof of Theorem 1 in \cite{Ritter2010} therefore continues to work without modification, the result being a proper holomorphic immersion $X \to \C\times\C^*$ that identifies the same pairs of points in $X$ as the given map $\psi$.

The proof that $\sigma$ is homotopic to $\psi$ remains unchanged from \cite[Lemma 6]{Ritter2010}.
\end{proof}

Let $X$ be a punctured circular domain. In \cite{Ritter2010} it was shown that when $X$ has no punctures, every homotopy class of maps $X \to \C\times\C^*$ contains an embedding, so we assume that $X$ has at least one puncture. Using the proper immersions constructed in Section~\ref{s:puncturedplanes} we obtain the following main result of this section.

\begin{theorem}\label{t:puncturedcirculardomain}
Let $X = \D \setminus (\bigcup\limits_{i=1}^m(c_i + r_i\overline\D) \cup \{a_1,\dots,a_n\})$, $m\ge0$, $n\ge 1$, be a punctured circular domain. Then every homotopy class of continuous maps $X \to \C\times\C^*$ contains a proper holomorphic immersion that identifies at most finitely many pairs of distinct points in $X$.
\end{theorem}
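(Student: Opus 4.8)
The plan is to reduce the statement to the Wold-type extension result, Theorem~\ref{t:Wold}, by opening up the boundary circles of $X$ into non-compact boundary components and exhibiting on the resulting bordered surface a proper holomorphic immersion with the prescribed winding data and only finitely many self-identifications. Write $C_0=\partial\D$ and $C_i=\partial(c_i+r_i\overline\D)$, $i=1,\dots,m$, for the boundary circles, and record the given homotopy class by winding numbers $k_j\in\Z$ about the punctures $a_j$ and $\ell_i\in\Z$ about the holes $C_i$. I would choose a point $p_i\in C_i$ on each circle and let $\overline X$ be the compact set $\overline\D\setminus\bigcup_{i=1}^m(c_i+r_i\D)$ with the punctures $a_1,\dots,a_n$ and the boundary points $p_0,\dots,p_m$ removed. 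Then $\overline X$ is a bordered Riemann surface whose interior is $X$ and whose boundary $\bigcup_{i=0}^m(C_i\setminus\{p_i\})$ consists of $m+1$ non-compact curves, exactly as Theorem~\ref{t:Wold} demands.

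Next I would construct a rational map $\psi=(f,g)$ in the spirit of the immersions of Section~\ref{s:puncturedplanes}. For the $\C^*$-component I take $g(z)=\prod_{j=1}^n(z-a_j)^{k_j}\prod_{i=1}^m(z-c_i)^{\ell_i}$ when the class is non-null, which sends $X$ into $\C^*$ with precisely the required windings, and $g(z)=e^z$ in the null case. For the $\C$-component I take $f=P/Q$ with $Q(z)=\prod_{i=0}^m(z-p_i)\prod_{j:\,k_j=0}(z-a_j)$, so that $f$ has a pole at each opened boundary point and at each zero-winding puncture. This placement of poles forces properness of $\psi$ on $\overline X$: as $z$ tends to an end of a boundary curve (that is, to some $p_i$) we have $\lvert f\rvert\to\infty$, and at a puncture $a_j$ either $g\to0,\infty$ (if $k_j\ne0$) or again $\lvert f\rvert\to\infty$. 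The immersion property and the bound on self-identifications then follow the proof of Theorem~\ref{t:puncturedplane}: choosing the numerator $P$ generically, of degree exceeding $\deg Q$, makes $f'$ nonzero at the critical points of $g$, and the algebraic-curve intersection argument used there guarantees that $\psi$ identifies only finitely many pairs. In the null case both are automatic, since $e^z$ has no critical points and, $\overline X$ being bounded, the relation $e^x=e^y$ holds for only finitely many pairs $x,y\in\overline X$; this boundedness is precisely what improves the countable bound of the null case of Theorem~\ref{t:puncturedplane} to a finite one here.

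It remains to secure the two boundary hypotheses of Theorem~\ref{t:Wold}, and I expect this to be the main obstacle. First, $\psi$ must be injective on $\partial\overline X$ with $\psi(\partial\overline X)\cap\psi(X)=\varnothing$; second, $\psi(\partial\overline X)$ must have the $\C$- or $\C^*$-nice projection property of Definition~\ref{definition_niceprojectionproperty}. Since each opened boundary curve escapes to infinity in the $\C$-direction through its pole at $p_i$, the $\C$-nice projection property is the natural target, and verifying it amounts to controlling the planar curves $f(C_i\setminus\{p_i\})$, ensuring they are eventually injective, pairwise non-crossing, and enclose no relatively compact complementary components, exactly the circular-domain analysis carried out in \cite{Ritter2010}. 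I would arrange all of this, together with boundary injectivity and disjointness from $\psi(X)$, by a generic choice of the points $p_i$ and of the coefficients of $P$, keeping $Q$ (hence the poles and the windings) fixed so that none of the properties already obtained is destroyed; the delicate point is doing so simultaneously with the immersion and finite-identification conditions, which is why the genericity must be exercised within the numerator alone. With such a $\psi$ in hand, Theorem~\ref{t:Wold} produces a proper holomorphic immersion $\sigma:X\to\C\times\C^*$ identifying the same finitely many pairs and homotopic to $\psi\rvert_X$; since $g$ realises the prescribed winding numbers, $\sigma$ lies in the given homotopy class, completing the argument.
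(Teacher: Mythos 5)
Your overall architecture coincides with the paper's: open each boundary circle at a point, produce an explicit proper holomorphic immersion with the prescribed windings on the resulting bordered surface, verify conditions (2) and (3) of Theorem~\ref{t:Wold}, and let that theorem finish the job. The implementations differ in one structural respect. The paper passes to the auxiliary punctured plane $Y=\C\setminus\{a_1,\dots,a_n,b_0,\dots,b_m,c_1,\dots,c_m\}$ and assigns winding $-1$ to each opened boundary point $b_i$ and $s_i+1$ to each centre $c_i$; this makes the class on $Y$ non-null in every case, so Theorem~\ref{t:puncturedplane}(2) applies uniformly (rational components, finitely many identifications, no separate null case), and the resulting simple pole of $g$ at each $b_i$ is what drives the $\C^*$-nice projection property. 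You instead keep the windings at the opened points equal to zero, put the boundary-escape poles into $f$, aim for the $\C$-nice projection property, and treat the all-zero-winding class separately with $g=e^z$ (where your observation that $e^z$ is injective on $\overline\D$ does rescue finiteness). Both routes can be made to work, but yours has two genuine soft spots.

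First, the finite-identification step. You invoke ``the algebraic-curve intersection argument used there,'' but the final perturbation step in the proof of Theorem~\ref{t:puncturedplane} chooses a regular value $a$ of $g$ with $g^{-1}(a)\subset X$, and the passage from the common component $Z$ to ``all but finitely many $x\in X$ have a partner in $X$'' uses that $Z\cap(\P\times(\P\setminus X))$ is finite; both of these exploit the fact that $\P\setminus X$ is a finite set, which fails for a punctured circular domain, whose complement has interior. The fix is to observe that your $(f,g)$ is precisely a map of the type produced by Theorem~\ref{t:puncturedplane}(2) on the punctured plane $\C\setminus(\{a_j\}\cup\{p_i\}\cup\{c_i:\,s_i\neq0\})$, with the $p_i$ and the zero-winding $a_j$ playing the role of the zero-winding punctures, and to quote that theorem there before restricting to $\overline X$ --- which is exactly the paper's reduction. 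Second, condition (2) of Theorem~\ref{t:Wold} (injectivity of $\psi$ on $\partial\overline X$ and $\psi(\partial\overline X)\cap\psi(X)=\varnothing$): you propose to secure this by genericity of the $p_i$ and of the coefficients of $P$, simultaneously with the immersion, finite-identification and nice-projection conditions, and you rightly flag this as delicate; as written it is an assertion rather than an argument, since one must show that the finitely many self-intersection points actually move off the fixed real curve $\partial\overline X$ under such perturbations without spoiling anything else. The paper sidesteps this by perturbing the domain rather than the map: it precomposes with $\rho=\mathrm{id}+v$, where $v$ is a small polynomial vanishing at all punctures, so that $\rho(\partial\overline X)$ avoids the finitely many bad points of $\psi$ while the map itself is untouched. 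Adopting that device would close the gap.
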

\begin{proof}
As in the case of a finitely punctured plane, $X$ is homotopy equivalent to a bouquet of $m+n$ circles, and therefore the homotopy class of a continuous map $X \to \C\times\C^*$ is completely determined by the winding $k_j\in\Z$ about each puncture $a_j$, $j=1,\dots,n$, together with the winding $s_i\in\Z$ about each hole $c_i + r_i\overline\D$, $i = 1,\dots,m$. For $i = 1,\dots,m$, let $\gamma_i = c_i + r_i\partial\D$, and let $b_i = c_i + r_i\sqrt{-1}\in \gamma_i$. We also set $\gamma_0 = \partial \D$ and $b_0 = \sqrt{-1} \in \gamma_0$.

Define the punctured plane $Y = \C\setminus\{a_1,\dots,a_n,b_0,b_1,\dots,b_m,c_1,\dots,c_m\}\supset X$, and consider the homotopy class of maps $Y \to \C\times\C^*$ with winding $k_j$ at each puncture $a_j$, $j = 1,\dots,n$, winding $-1$ at each puncture $b_0,b_1,\dots,b_m$, and winding $s_i + 1$ at each puncture $c_i$, $i = 1,\dots,m$. As there is non-zero winding at $b_0$, Theorem~\ref{t:puncturedplane} gives a proper holomorphic immersion $\psi = (f,g) : Y \to\C\times\C^*$ with rational components that identifies at most finitely many pairs of distinct points.

Define the bordered Riemann surface $\overline X = X \cup \bigcup\limits_{i=0}^m (\gamma_i \setminus \{b_i\})$ by taking $X$ together with each of its boundary curves $\gamma_i$, with the distinguished points $b_i$ removed. Note that each boundary component of $\overline X$ is non-compact. The map $\psi\rvert_{\overline X}$ gives a proper holomorphic immersion of $\overline X$ into $\C\times\C^*$ that identifies at most finitely many pairs of distinct points in $X$. Moreover, $\psi\rvert_{X}$ is in the given homotopy class. Assuming for the moment that conditions~(2) and (3) in Theorem~\ref{t:Wold} hold for $\psi\rvert_{\overline X}$, the theorem gives a proper holomorphic immersion $\sigma : X \to \C\times\C^*$ in the given homotopy class that identifies precisely the same pairs of points in $X$ as $\psi$.

To ensure that condition~(3) holds, first note that $g$ has a pole of order $1$ at each of the points $b_0,\dots,b_m$, so $g(z) \to \infty$ as $z \to b_i$ along each curve $\gamma_i$. For $i = 0,\dots,m$, let
\[
	\theta_i = \lim_{\substack{z \in \gamma_i\\z\to b_i}}\arg g(z)\mod \pi\,.
\]
Then $\theta_i \in [0,\pi)$ is well defined and, defining $\widetilde g_i(z) = g(z)(z-b_i)$, we in fact have 
\[
	\theta_i = \arg \widetilde g_i(b_i)\mod \pi
\] by the choice of $b_i \in \gamma_i$. If $\theta_i \neq \theta_j$ for all $0\le i < j \le n$, then, assuming the images $\psi(\gamma_i)$ are pairwise disjoint, $\psi(\partial \overline X)$ satisfies the $\C^*$-nice projection property. Otherwise, choose a point $d \in Y$ far away from $\D$, and add an additional puncture in $Y$ at $d$ with winding $1$. This has the effect of introducing a zero of order $1$ at $d$ into $g$. If $\vert d\rvert$ is sufficiently large, we will still have $\theta_i \neq \theta_j$ for any pairs $i,j$ where this already held. Now suppose that we previously had $\theta_i = \theta_j$ for some pair $i\neq j$. Provided that $d$ is chosen to lie off the real line containing $b_i$ and $b_j$, equality will no longer hold for the modified $g$. Thus, for a generic choice of large $d$, $\psi(\partial \overline X)$ now satisfies the $\C^*$-nice projection property.

It remains to be shown that we can ensure that condition~(2) in Theorem~\ref{t:Wold} holds. The only way in which condition~(2) may fail is if for at least one point $x \in \partial\overline X$ there exists $y \in \overline X$, $y\neq x$, such that $\psi(y) = \psi(x)$. Note that there are at most finitely many points $x \in \C$ with this property. Suppose there is a single such point $x \in \partial\overline X$ (the case when several such points exist is handled by a straightforward generalisation of the following argument). Let $(x_j)_{j\in\N}$ be a sequence of points in $X$ converging to $x$. For each $j \in \N$, let $v_j \in \O(\C)$ be the unique polynomial with a simple zero at each puncture of $Y$ and no other zeros, such that $v_j(x_j) = x-x_j$. Fix a compact set $K$ containing $\overline \D$ in its interior, and let $\epsilon > 0$. Then $\lVert v_j \rVert_K < \epsilon$ for sufficiently large $j \in \N$. Let $\rho(z) = z + v_j(z)$ for some such $j$. For $\epsilon$ sufficiently small, $\rho$ is injective on $\overline \D$ and hence restricts to a biholomorphism of $\overline X$ onto $\rho(\overline X)$. Thus $\rho(\overline X)$ is a bordered Riemann surface such that $x \notin \rho(\partial\overline X) = \partial (\rho(\overline X))$, since $\rho(x_j) = x$ and $x_j \in X$. For small $\epsilon$, $\rho(\partial\overline X)$ will not contain any of the finitely many points $y \in \C$ for which there exists $z\neq y$ satisfying $\psi(z) = \psi(y)$, and the $\C^*$-nice projection property will still be satisfied for $\psi(\rho(\partial\overline X))$. Thus the conditions of Theorem~\ref{t:Wold} are satisfied for $\overline X$ with the map $\psi \circ \rho$.
\end{proof}

As for finitely punctured planes, for many choices of a punctured circular domain $X$ and a homotopy class of maps $X \to \C\times\C^*$ we are able to obtain an embedding in the given homotopy class.

\begin{theorem}\label{t:embeddingpuncturedcirculardomain}
Let $X = \D \setminus (\bigcup\limits_{i=1}^m(c_i + r_i\overline\D) \cup \{a_1,\dots,a_n\})$, $m\ge0$, $n\ge 1$, be a punctured circular domain, and suppose a homotopy class of maps $X \to \C\times\C^*$ is given with winding $k_j$ about each puncture $a_j$, $j=1,\dots,n$, and winding $s_i$ about each hole $c_i + r_i \overline \D$, $i=1,\dots,m$. Suppose that one of the following holds.
\begin{enumerate}
\item $k_j \neq 0$ for $j = 1,\dots,n$.
\item $k_j \neq 0$ for $j = 1,\dots,n-1$, and $k_n = 0$.
\item $n\ge3$, $k_1 = 1$, $k_2 = -1$, $k_j = 0$ for $j = 3,\dots,n$, and $s_i = 0$ for $i=1,\dots,m$.
\item $n\ge3$, $k_1 = \pm1$, $k_j = 0$ for $j = 2,\dots,n$, and $s_i = 0$ for $i=1,\dots,m$.
\item $n\ge2$, $k_j = 0$ for $j = 1,\dots,n$, and $s_i = 0$ for $i = 1,\dots,m$.
\item $n\ge2$, $m = 1$, $k_j = 0$ for $j = 1,\dots,n$, and $s_1 = \pm1$.
\end{enumerate}
Then the given homotopy class contains an embedding $X \to \C\times\C^*$.
\end{theorem}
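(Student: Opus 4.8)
The plan is to adapt the proof of Theorem~\ref{t:puncturedcirculardomain}, arranging that the intermediate proper holomorphic immersion is in fact injective on $\overline X$, so that Theorem~\ref{t:Wold} promotes it to an embedding $\sigma : X \to \C\times\C^*$ in the prescribed homotopy class. As there, I would pick boundary points $b_i \in \gamma_i$, $i = 0,\dots,m$, and construct a map $\psi = (f,g)$ with rational components, holomorphic on $\overline X$, of winding $k_j$ about each puncture $a_j$ and $s_i$ about each hole. The essential gain over Theorem~\ref{t:puncturedcirculardomain} is that in each of the six cases one of the two components will be an injective local biholomorphism of $\overline X$ --- the identity, a linear map, or a M\"obius map. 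Such a component at once makes $\psi$ injective on $\overline X$ and makes $\psi$ an immersion, so conditions~(1) and~(2) of Theorem~\ref{t:Wold} hold for free and the critical-point analysis and the perturbation $\rho$ of Theorem~\ref{t:puncturedcirculardomain} become unnecessary. The real work is only to realise the prescribed winding numbers and to secure condition~(3), the nice projection property.

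For cases~(1) and~(2), in which some $k_j \ne 0$, I would make the first component injective and invoke the $\C^*$-nice projection property exactly as in Theorem~\ref{t:puncturedcirculardomain}. In case~(1), where all $k_j \ne 0$, take $f(z) = z$; in case~(2), where $k_n = 0$, take $f(z) = 1/(z-a_n)$, whose pole at $a_n$ supplies properness at that puncture. In both cases $g$ carries the winding data, with simple poles at the $b_i$ so that $g \to \infty$ along the boundary, and with zeros or poles inside the holes realising the $s_i$. Since $\overline X \subset \overline\D$ is bounded there is no condition at infinity to meet, which is why no restriction on the $s_i$ is needed; and because $f$ is injective, $\psi$ embeds $\overline X$ whatever the winding pattern of $g$.

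For cases~(3)--(6), where the windings are sparse and take values in $\{0,\pm1\}$, I would instead take the second component to be an injective M\"obius map and use the $\C$-nice projection property. Set $g(z) = (z-a_1)/(z-a_2)$ in case~(3), $g(z) = (z-a_1)^{\pm1}$ in case~(4), and $g(z) = (z-c_1)^{\pm1}$ in case~(6), the nonzero winding now residing inside the single hole; in each the zero and pole of $g$ lie off $\overline X$, so $g$ maps $\overline X$ injectively into $\C^*$ with exactly the prescribed windings. The null case~(5) is where boundedness is decisive: here I would take $g(z) = (z-p)/(z-q)$ with $p,q$ outside $\overline\D$, so that $g$ has winding zero about every puncture and hole yet remains injective on $\overline X$ --- a configuration with no counterpart on an unbounded punctured plane. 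In all four cases $f$ is a rational function with poles placed at the $b_i$ (forcing $f \to \infty$ along the boundary, as $\C$-nice projection requires) and at the zero-winding punctures (supplying properness there), its numerator chosen so that $f$ is finite and nonzero at all these points.

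The hard part throughout is condition~(3). As in Theorem~\ref{t:puncturedcirculardomain} it reduces to making the limiting arguments $\theta_i$ at the points $b_i$ --- of whichever component has poles there --- pairwise distinct modulo $\pi$, the images $\psi(\gamma_i)$ being automatically disjoint once $\psi$ is injective on $\overline X$. The delicate point is that I must not disturb the injective component, or injectivity is lost; but this is built into the dichotomy, since the component governing the relevant nice projection property is exactly the non-injective one --- $g$ for the $\C^*$-nice cases~(1),(2) and $f$ for the $\C$-nice cases~(3)--(6). I would therefore break any coincidences among the $\theta_i$ by perturbing only this non-injective component: for the $\C$-nice cases a generic numerator of $f$ makes the residues at the $b_i$ have distinct arguments modulo $\pi$, while for the $\C^*$-nice cases one inserts a zero of $g$ at a far point outside $\overline\D$ as in Theorem~\ref{t:puncturedcirculardomain}. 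Each such perturbation preserves properness, the immersion property, injectivity on $\overline X$, and the homotopy class, and with condition~(3) in hand Theorem~\ref{t:Wold} yields the required embedding.
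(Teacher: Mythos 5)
Your proposal is correct and follows essentially the same route as the paper: explicit rational maps on $\overline X$ with one injective (M\"obius or affine) component carrying either properness or the winding data, a generic choice of the other component to secure the relevant nice projection property, and an application of Theorem~\ref{t:Wold}; your choices in cases (1)--(5) coincide with the paper's up to inessential substitutions such as $(z-p)/(z-q)$ versus $z-2$ in case~(5). The only real divergence is case~(6), where the paper takes $g(z)=(z-b_1)^{\pm1}(z-b_0)^{\mp1}$ (still M\"obius, hence injective) and uses the $\C^*$-nice projection property, whereas you put the winding at $c_1$ via $g(z)=(z-c_1)^{\pm1}$ and fall back on the $\C$-nice projection property with poles of $f$ at $b_0,b_1$ --- both work.
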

\begin{proof}
Let $Y = \C\setminus\{a_1,\dots,a_n,b_0,b_1,\dots,b_m,c_1,\dots,c_m\}$, where the points $b_i$ are defined as in the proof of Theorem~\ref{t:puncturedcirculardomain}. We also define $\overline X$ as in the proof of Theorem~\ref{t:puncturedcirculardomain}.

In case~(1), Theorem~\ref{t:embeddingpuncturedplane} gives an embedding $\psi = (\id\vert_Y,g) : Y \to \C\times\C^*$ with winding $k_j\neq 0$ at each $a_j$, $j=1,\dots,n$, winding $-1$ at $b_0,\dots,b_m$, and winding $s_i+1$ at each $c_i$, $i=1,\dots,m$. By the same procedure as in the proof of Theorem~\ref{t:puncturedcirculardomain} we can modify $g$ to ensure that $\psi(\partial \overline X)$ satisfies the $\C^*$-nice projection property. Applying Theorem~\ref{t:Wold} to $\psi\rvert_{\overline X}$ gives the desired embedding $X \to \C\times\C^*$. Case~(2) follows analogously.

For case~(3), let $\psi : \overline X \to \C\times\C^*$ be given by
\[
	\psi(z) = ((z-d)/(z-a_3)\cdots(z-a_{n})(z-b_0)\cdots(z-b_m), {(z-a_1)}/{(z-a_2)})\,,
\]
where $d \in \C\setminus \overline \D$. The second component of $\psi$ is an injective immersion, and $\psi$ is an embedding of the bordered Riemann surface $\overline X$ in the given homotopy class. As in the proof of Theorem~\ref{t:puncturedcirculardomain}, we choose $d$ to lie off the real lines joining each pair $b_i\neq b_j$, and with $\lvert d \rvert$ large. Then $\psi(\partial \overline X)$ has the $\C$-nice projection property, and Theorem~\ref{t:Wold} gives the desired embedding $X \to \C\times\C^*$. Case~(4) is handled similarly, taking
\[
	\psi(z) = ((z-d)/(z-a_2)\cdots(z-a_{n})(z-b_0)\cdots(z-b_m), {(z-a_1)}^{\pm1})\,,
\]
while in case~(5) we take
\[
	\psi(z) = ((z-d)/(z-a_1)\cdots(z-a_n)(z-b_0)\cdots(z-b_m),z-2)\,.
\]
In each case, a generic choice of large $d$ ensures that $\psi(\partial \overline X)$ satisfies the $\C$-nice projection property. 

Finally, in case~(6), let
\[
	\psi(z) = (1/(z-a_1)\cdots(z-a_n), (z-b_1)^{\pm1}(z-b_0)^{\mp1})\,.
\]
Then $\psi:\overline X \to \C\times\C^*$ is an embedding in the given homotopy class, and $\psi(\partial \overline X)$ satisfies the $\C^*$-nice projection property, with one boundary curve going to $0$ while the other goes to $\infty$. 
\end{proof}

If we temporarily ignore the homotopy classes of our embeddings, we have the following immediate corollary of Theorems \ref{t:embeddingpuncturedplane} and \ref{t:embeddingpuncturedcirculardomain}.

\begin{corollary}\label{c:embeddingplanardomain}
Every finitely connected planar domain embeds into $\C\times\C^*$.
\end{corollary}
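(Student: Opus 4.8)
The plan is to read the result off the embedding theorems already proved, exploiting the fact that here we may discard the homotopy class and need only exhibit \emph{some} embedding of each domain. First I would invoke the Koebe uniformisation theorem, as in the introduction and at the start of Section~\ref{s:planardomains}, to replace an arbitrary finitely connected planar domain $X$ by a biholomorphic normal form, noting that embeddability into $\C\times\C^*$ is a biholomorphic invariant. This reduces the problem to four mutually exclusive and exhaustive cases according to the boundary structure of $X$: (i) $X$ is biholomorphic to $\C$; (ii) every boundary component of $X$ is an isolated point, so $X\cong\C\setminus\{a_1,\dots,a_n\}$ with $n\ge1$; (iii) $X$ has at least one non-degenerate boundary component but no punctures, so $X$ is a circular domain ($m\ge1$, $n=0$); and (iv) $X$ has at least one non-degenerate boundary component and at least one puncture, so $X$ is a punctured circular domain with $m\ge0$, $n\ge1$.

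Next I would dispatch each case in turn. Case (i) is immediate: $z\mapsto(z,e^z)$ is a proper injective immersion $\C\to\C\times\C^*$, properness following already from properness of the first component. Case (iii) is exactly the no-puncture statement of \cite{Ritter2010}, which produces an embedding in every homotopy class, hence in particular one embedding. For the two remaining cases the key observation is that, having abandoned control of the homotopy class, we are free to prescribe \emph{nonzero} winding at every puncture. Taking $k_j=1$ for all $j$ places case (ii) under hypothesis~(1) of Theorem~\ref{t:embeddingpuncturedplane} and case (iv) under hypothesis~(1) of Theorem~\ref{t:embeddingpuncturedcirculardomain}, each of which then supplies the required embedding; in case (iv) this succeeds regardless of the hole windings $s_i$.

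The one point worth checking carefully — and essentially the only one — is that these four cases genuinely exhaust the finitely connected planar domains and that the Koebe reduction applies to punctured domains as well as to those without isolated boundary points. I would justify this exactly as in the paragraph opening Section~\ref{s:planardomains}: fill in the isolated boundary points, uniformise the resulting domain (which has only non-degenerate boundary components) to the open unit disc with finitely many pairwise disjoint closed discs removed, and observe that the punctures reappear as finitely many interior points, producing the punctured circular domain normal form of Definition~\ref{d:puncturedcirculardomain}. With that reduction in place there is no genuine obstacle to surmount: the mathematical content of the corollary resides entirely in Theorems~\ref{t:embeddingpuncturedplane} and \ref{t:embeddingpuncturedcirculardomain} together with \cite{Ritter2010}, and the proof amounts merely to selecting, in each case, a homotopy class whose winding data meets the most easily satisfied hypothesis of the applicable theorem.
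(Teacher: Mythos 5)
Your proposal is correct and matches the paper's intent exactly: the paper states the corollary without proof, calling it an ``immediate'' consequence of Theorems~\ref{t:embeddingpuncturedplane} and \ref{t:embeddingpuncturedcirculardomain} once homotopy classes are disregarded, which is precisely your device of prescribing nonzero winding at every puncture so as to land in hypothesis~(1) of each theorem. Your explicit case division (the trivial case $\C$, the Koebe reduction to circular and punctured circular normal forms, and the appeal to \cite{Ritter2010} for unpunctured circular domains) simply fills in the routine details the paper leaves implicit.
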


Using the procedure in the proof of Theorem~\ref{t:Wold} it is clear that a general solution to the embedding problem for finitely punctured planes would give a strong Oka principle for embeddings of every finitely connected planar domain into $\C\times\C^*$. While such a result is currently out of reach, we have the following corollaries of Theorem~\ref{t:embeddingpuncturedcirculardomain} giving new situations in which a strong Oka principle for embeddings does indeed hold.

\begin{corollary}\label{c:strongOkapunctureddisc}
Let $X$ be an open disc with either one or two punctures. Then every homotopy class of continuous maps $ X \to \C\times\C^*$ contains an embedding.
\end{corollary}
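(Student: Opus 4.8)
The plan is to realise $X$ as a punctured circular domain and then to verify that every homotopy class of continuous maps $X \to \C\times\C^*$ is one of the classes already covered by Theorem~\ref{t:embeddingpuncturedcirculardomain}, so that an embedding exists directly. First I would invoke the Koebe uniformisation theorem to present $X$ as a punctured circular domain with no deleted discs, that is, $m = 0$, and with $n$ punctures $a_1,\dots,a_n$, where $n = 1$ or $n = 2$. As in the proof of Theorem~\ref{t:puncturedcirculardomain}, the domain $X$ is homotopy equivalent to a bouquet of $n$ circles, so each homotopy class of maps $X \to \C\times\C^*$ is determined by the winding numbers $k_1,\dots,k_n \in \Z$ about the punctures; there are no holes, hence no winding numbers $s_i$ to record. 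It then suffices to match every tuple $(k_1,\dots,k_n)$ to a case of Theorem~\ref{t:embeddingpuncturedcirculardomain}.

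For $n = 2$ I would argue as follows. If $k_1,k_2 \neq 0$, this is case~(1). If exactly one of $k_1,k_2$ vanishes, then after reordering the punctures we have $k_1 \neq 0$ and $k_2 = 0$, which is case~(2). If $k_1 = k_2 = 0$, then since $n = 2 \ge 2$ and there are no holes, so that the requirement $s_i = 0$ is vacuous, this is case~(5). Thus all homotopy classes for the twice-punctured disc are covered. For $n = 1$ with $k_1 \neq 0$, the class is again case~(1).

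The remaining class, the once-punctured disc with $k_1 = 0$, is where I expect the only genuine obstacle, since none of the cases (3)--(6) apply here (they all require $n \ge 2$). I would cover it by case~(2) read with $n = 1$, in which the condition ``$k_j \neq 0$ for $j = 1,\dots,n-1$'' is vacuous and only $k_1 = 0$ is required. To be sure this reduction is sound, I would unwind the proof of Theorem~\ref{t:embeddingpuncturedcirculardomain} in this instance: there one passes to the auxiliary punctured plane $Y = \C\setminus\{a_1,b_0\}$, where $b_0 = \sqrt{-1}\in\gamma_0 = \partial\D$, and seeks an embedding of $Y$ with winding $0$ at $a_1$ and winding $-1$ at $b_0$. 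Reordering so that the puncture of nonzero winding comes first, this is exactly case~(2) of Theorem~\ref{t:embeddingpuncturedplane} with two punctures, giving the embedding $\psi(z) = \bigl(1/(z-a_1),\,(z-b_0)^{-1}\bigr)$, whose first component is an injective immersion.

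I would then check, as in the proof of Theorem~\ref{t:puncturedcirculardomain}, that $\psi(\partial\overline X)$ has the $\C^*$-nice projection property. Here this is immediate, since $\overline X$ has the single boundary curve $\gamma_0\setminus\{b_0\}$, so the condition requiring the limiting arguments $\theta_i$ to be pairwise distinct is vacuous. Applying Theorem~\ref{t:Wold} to $\psi\rvert_{\overline X}$ then yields the required null-homotopic embedding $X \to \C\times\C^*$, completing the case analysis and hence the proof.
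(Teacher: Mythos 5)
Your proposal is correct and follows exactly the route the paper intends: the corollary is stated as an immediate consequence of Theorem~\ref{t:embeddingpuncturedcirculardomain}, and your matching of the tuples $(k_1)$ and $(k_1,k_2)$ to cases (1), (2) and (5) is the intended derivation. Your extra care in unwinding case~(2) for $n=1$ (where the auxiliary plane $Y=\C\setminus\{a_1,b_0\}$ falls under case~(2) of Theorem~\ref{t:embeddingpuncturedplane} with the nonzero winding $-1$ at $b_0$) correctly resolves the only delicate point, namely that the once-punctured disc with $k_1=0$ is genuinely covered.
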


\begin{corollary}\label{c:strongOkapuncturedcirculardomain}
Let $X$ be a punctured circular domain with a single puncture. Then every homotopy class of continuous maps $X \to \C\times\C^*$ contains an embedding.
\end{corollary}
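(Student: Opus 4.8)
The plan is to deduce this directly from Theorem~\ref{t:embeddingpuncturedcirculardomain}, by observing that when the number of punctures is $n = 1$ its cases~(1) and~(2) already exhaust every homotopy class, regardless of the number $m$ of holes. Since such an $X$ is homotopy equivalent to a bouquet of $m+1$ circles, a homotopy class of maps $X \to \C\times\C^*$ is completely determined by the single winding number $k_1 \in \Z$ about the puncture $a_1$ together with the winding numbers $s_1,\dots,s_m \in \Z$ about the holes. The only case distinction to make is therefore whether $k_1$ vanishes.

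First I would treat $k_1 \neq 0$. This is precisely case~(1) of Theorem~\ref{t:embeddingpuncturedcirculardomain} specialised to $n = 1$, and that case imposes no constraint on the $s_i$, so the theorem immediately produces an embedding $X \to \C\times\C^*$ in the given homotopy class. Next, for $k_1 = 0$, I would invoke case~(2): when $n = 1$ the hypothesis ``$k_j \neq 0$ for $j = 1,\dots,n-1$'' ranges over the empty index set and is vacuously satisfied, while ``$k_n = 0$'' reads ``$k_1 = 0$''. Hence case~(2) applies verbatim, again with no restriction on the winding numbers $s_i$ about the holes, and supplies the required embedding.

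Since $k_1$ is either zero or nonzero, these two cases cover every homotopy class, which completes the argument. I do not expect any genuine obstacle here beyond the bookkeeping observation that, for a single puncture, the index range in case~(2) of Theorem~\ref{t:embeddingpuncturedcirculardomain} collapses to the empty set; all of the analytic content---constructing the rational proper immersion on the auxiliary punctured plane $Y$, arranging the $\C^*$-nice projection property for the boundary curves, and passing from $\overline X$ back to $X$ via Theorem~\ref{t:Wold}---has already been carried out inside the proof of that theorem.
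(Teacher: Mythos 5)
Your derivation is correct and is exactly the intended one: the paper states this corollary without proof as an immediate consequence of Theorem~\ref{t:embeddingpuncturedcirculardomain}, and with $n=1$ the dichotomy $k_1\neq 0$ versus $k_1=0$ is covered by cases~(1) and~(2) respectively, neither of which restricts the winding numbers $s_i$ about the holes. Your observation that the hypothesis of case~(2) becomes vacuous for the empty index range is the only bookkeeping point needed, and you have handled it correctly.
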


\section{On a null-homotopic embedding of $\C^*$ into $\C\times\C^*$}\label{s:nullhomotopicembedding}

\noindent
As discussed in Section 2, while it is trivial to find an embedding $\C^* \to \C\times\C^*$ in any non-null homotopy class, the question on the existence of a null-homotopic embedding $\C^* \to \C\times\C^*$ is surprisingly difficult. We present here some partial results towards answering this question.

We begin by presenting a few explicitly defined maps that have some, but not all, of the properties of a null-homotopic embedding of $\C^*$ into $\C\times\C^*$. Note that a map $\C^*\to\C\times\C^*$ is null-homotopic if and only if it factors through the map $\id\times\exp:\C\times\C\to\C\times\C^*$, that is, if it has the form $(f,e^g)$ for some $f,g\in\O(\C^*)$.

\begin{nexample}  
(a)  The map $\C^*\to\C\times\C^*$, $z\mapsto (z+1/z, e^{\pi iz})$, is a null-homotopic proper immersion that identifies the points
\[ -k -\sqrt{k^2+1} \quad\textrm{and}\quad k -\sqrt{k^2+1} \]
and the points
\[ -k +\sqrt{k^2+1} \quad\textrm{and}\quad k +\sqrt{k^2+1} \]
for each $k\in\mathbb N$, and is otherwise injective.  It induces a null-homotopic embedding of $\C^*$ into $\C\times\C^*$ with each point of an infinite discrete set blown up.

(b)  The map $\C^*\to\C\times\C^*$, $z\mapsto (e^z, e^{iz})$, is a null-homotopic injective immersion that fails to be proper both at $0$ and $\infty$.  The same formula defines an embedding of $\C$ into $\C^*\times\C^*$.

(c)  The map $\C^*\to\C\times\C^*$, $z\mapsto (z, e^{1/z})$, is a null-homotopic injective immersion that is proper at $\infty$ but not at $0$.  The same formula defines an embedding of $\C^*$ into $\C^*\times\C^*$.
\end{nexample}

The following results show that the first component of a null-homotopic holomorphic injection of $\C^*$ into $\C\times\C^*$ cannot have any symmetries and cannot be proper, that is, cannot have a pole at both $0$ and $\infty$.

\begin{theorem}  \label{t:no-symmetries}
Let $(f,e^g):\C^*\to\C\times\C^*$ be a holomorphic injection.  If $\sigma:\C^*\to\C^*$ is holomorphic and $f\circ\sigma=f$, then $\sigma=\mathrm{id_{\C^*}}$.
\end{theorem}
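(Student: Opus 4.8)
The plan is to reduce everything to the study of the single holomorphic function $\phi=(g\circ\sigma-g)/2\pi i\in\O(\C^*)$, whose values modulo $\Z$ record precisely when the second coordinate $e^g$ fails to separate a point from its $\sigma$-image. I would first dispose of the degenerate possibility that $f$ is constant: if it were, injectivity of $(f,e^g)$ would force $e^g\colon\C^*\to\C^*$ to be injective, which is impossible, since $e^g$ has winding number $0$ about the origin (because $g'\in\O(\C^*)$ has vanishing residue there), whereas any injective holomorphic self-map of $\C^*$ is an automorphism and hence has winding number $\pm 1$. So I may assume $f$ is non-constant; in particular $\sigma$ is non-constant, as a constant $\sigma$ together with $f\circ\sigma=f$ would make $f$ constant.

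The heart of the argument is the following observation. For any $z\in\C^*$ and any $n\in\Z$, the relation $\phi(z)=n$ gives $e^{g(\sigma(z))}=e^{g(z)}$, and since also $f(\sigma(z))=f(z)$, injectivity of $(f,e^g)$ forces $\sigma(z)=z$, whence $\phi(z)=0$. Thus $\phi$ attains no nonzero integer value, i.e.\ $\phi$ maps $\C^*$ into $\C\setminus(\Z\setminus\{0\})$, a domain omitting at least two points of $\C$. Composing with the covering $\exp\colon\C\to\C^*$, the entire function $\phi\circ\exp$ then omits the values $1$ and $2$, so by Picard's theorem it is constant; hence $\phi\equiv c$ is constant. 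Because $\phi$ omits $\Z\setminus\{0\}$, either $c=0$ or $c\notin\Z$.

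If $c=0$ then $g\circ\sigma=g$, so $(f,e^g)\circ\sigma=(f,e^g)$, and injectivity gives $\sigma=\id_{\C^*}$, as desired. The remaining task, which I expect to be the main obstacle, is to exclude $c\notin\Z$. Here $g\circ\sigma=g+2\pi ic$, so, writing $F=(f,e^g)$ and $T(u,v)=(u,e^{2\pi ic}v)$, we have the conjugacy $F\circ\sigma=T\circ F$. Since $F$ and $T$ are injective, so is $F\circ\sigma$, and therefore so is $\sigma$; an injective holomorphic self-map of $\C^*$ is an automorphism, so $\sigma(z)=az$ or $\sigma(z)=a/z$ for some $a\in\C^*$. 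In either case I would compare the constant Laurent coefficients on both sides of $g\circ\sigma=g+2\pi ic$, equivalently the mean values over a circle $\lvert z\rvert=\rho$: for both $z\mapsto az$ and $z\mapsto a/z$ the mean of $g\circ\sigma$ over the circle equals the mean of $g$, namely the coefficient $b_0$ of $z^0$ in the Laurent expansion of $g$. This forces $2\pi ic=0$, contradicting $c\notin\Z$. Hence $c\notin\Z$ cannot occur, and the proof is complete.

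The two supporting facts I lean on are where the real content sits: that an injective holomorphic self-map of $\C^*$ is an automorphism (a standard consequence of the conformal classification of doubly connected planar domains, so that a subdomain of $\P$ conformal to $\C^*$ has complement exactly two points), and the value-omission step, which is the one place where injectivity of $(f,e^g)$ is genuinely used. I expect the elimination of the nonzero-constant case $c\notin\Z$ to require the most care, being the only point at which the incompatibility of the hypotheses is not immediately transparent.
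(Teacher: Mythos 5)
Your proof is correct, and while it opens exactly as the paper does --- observing that injectivity of $(f,e^g)$ forces $g\circ\sigma-g$ to omit $2\pi i\Z^*$, hence to be constant by Picard, and that the constant being $0$ immediately yields $\sigma=\id_{\C^*}$ --- your elimination of the nonzero constant is genuinely different and rather slicker. The paper first notes that $c\neq 0$ forces $\sigma$ to have no fixed points, which (after showing $\sigma$ is an injective, hence by Picard surjective, self-map of $\C^*$) pins down $\sigma(z)=az$ with $a\neq 1$; it then runs a trichotomy on $a$, using $f\circ\sigma=f$ together with Liouville (for $\lvert a\rvert\neq 1$) and the identity theorem (for $a$ on the circle but not a root of unity) to force $f$ constant and a contradiction, leaving only $a$ a $k$-th root of unity, whence $kc=0$. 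You instead skip the fixed-point observation entirely, allow both normal forms $az$ and $a/z$, and compare the zeroth Laurent coefficients (circle means) of $g\circ\sigma$ and $g+2\pi ic$, which kills $c$ in one stroke; notably, your endgame never uses $f\circ\sigma=f$ again, whereas the paper's does. The only stylistic difference worth flagging is that you invoke the conformal classification of doubly connected domains to get that an injective holomorphic self-map of $\C^*$ is an automorphism, where the paper gets surjectivity directly from Picard's little theorem applied to $\sigma\circ\exp$; both are standard and correct, and your preliminary disposal of the case of constant $f$ (via the winding-number argument) is sound, if not strictly needed for the later steps.
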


\begin{proof}
If $z\in\C^*$, then $f(\sigma(z))=f(z)$, so if $\sigma(z)\neq z$, then $e^{g(\sigma(z))}\neq e^{g(z)}$, so $g(\sigma(z))-g(z)\in\C\setminus 2\pi i\Z$.  Hence the holomorphic function $z\mapsto g(\sigma(z))-g(z)$ on $\C^*$ takes values in $\C\setminus 2\pi i\Z^*$, so it is constant, say $c$.  If $c=0$, then $\sigma=\mathrm{id_{\C^*}}$.  So suppose $c\in\C\setminus 2\pi i\Z$.  Then $\sigma$ has no fixed points.

If $\sigma(z)=\sigma(w)$, then 
\[ f(z)=f(\sigma(z))=f(\sigma(w))=f(w) \] 
and 
\[ g(z)+c=g(\sigma(z))=g(\sigma(w))=g(w)+c, \] 
so $g(z)=g(w)$, and $z=w$.  Thus $\sigma$ is injective.  Since $\sigma$ is also surjective by Picard's little theorem, $\sigma$ is an automorphism of $\C^*$.  Since $\sigma$ has no fixed points, $\sigma(z)=az$ for some $a\in\C^*$, $a\neq 1$.

If $\lvert a\rvert\neq 1$, then $f$ is constant by Liouville's theorem.  If $\vert a \rvert=1$ and $a$ is not a root of unity, so $\{a^n:n\in\Z\}$ is dense in the circle, then $f$ is constant by the identity theorem.  But then $e^g:\C^*\to\C^*$ is injective, which is absurd.

It follows that $a$ is a $k$-th root of unity for some $k\geq 1$, so
\[ g(z)=g(a^k z)=g(\sigma^k(z))=g(z)+kc \]
and $c=0$, contradicting the assumption that $c\in\C\setminus 2\pi i\Z$.  Thus $\sigma=\mathrm{id_{\C^*}}$.
\end{proof}

\begin{theorem}  \label{t:not-rational}
Let $(f,e^g):\C^*\to\C\times\C^*$ be a holomorphic injection.  Then $f$ is not proper.
\end{theorem}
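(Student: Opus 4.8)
Suppose, for contradiction, that $f$ is proper. If $f$ had an essential singularity at $0$ or at $\infty$, then by Casorati--Weierstrass it would come arbitrarily close to every finite value there, contradicting $\lvert f\rvert\to\infty$; hence properness forces $f$ to be a Laurent polynomial with genuine poles at both $0$ and $\infty$, and as a rational self-map of $\P$ it then has degree $d\ge 2$. The plan is to recast the injectivity of $(f,e^g)$ as a statement about the holomorphic function $g(z)-g(w)$ on the fibre product of $f$ with itself, and to contradict it using the little and big Picard theorems together with Theorem~\ref{t:no-symmetries}.

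First I would form the algebraic curve $C=\{(z,w)\in\C^*\times\C^* : f(z)=f(w)\}$, which contains the diagonal $\Delta$ and, since $d\ge 2$, contains it strictly; write $C_0$ for the union of the irreducible components of $C$ other than $\Delta$. The map $(f,e^g)$ is injective precisely when $e^{g(z)}\ne e^{g(w)}$ for all $(z,w)\in C_0$, that is, when the holomorphic function $\Psi:=g(z)-g(w)$ omits the infinite set $2\pi i\Z$ on $C_0$. Fixing an irreducible component $S$ of $C_0$ and passing to its normalisation, I obtain a compact Riemann surface $\hat S$ from which finitely many \emph{ends} $E$ (the points lying over $\{0,\infty\}$ in either coordinate) have been deleted, with $\Psi$ holomorphic on $S=\hat S\setminus E$.

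The heart of the argument is a trichotomy at the ends. If $\Psi$ has an essential singularity at some end, then by the big Picard theorem $\Psi$ assumes every value, with at most one exception, in each punctured neighbourhood of that end; as $2\pi i\Z$ is infinite, $\Psi$ must hit $2\pi i\Z$, contradicting injectivity. If instead $\Psi$ has no essential singularity at any end, it extends meromorphically to the compact surface $\hat S$; were it non-constant it would be surjective onto $\P$, so $\bigcup_{n}\Psi^{-1}(2\pi i n)$ would be infinite and hence could not lie in the finite set $E$, again producing an interior point of $S$ at which $\Psi\in 2\pi i\Z$. There remains the case $\Psi\equiv\gamma$ constant on $S$. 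If $\gamma\in 2\pi i\Z$ injectivity fails outright, so $\gamma\notin 2\pi i\Z$; then for any $(z,w_1),(z,w_2)\in S$ one has $f(w_1)=f(z)=f(w_2)$ and $g(w_1)=g(z)-\gamma=g(w_2)$, whence $w_1=w_2$ by injectivity. Thus the first projection $\pi:S\to\C^*$, $(z,w)\mapsto z$, is injective with cofinite image, so $z\mapsto w$ defines a holomorphic map on a cofinite subset of $\C^*$; properness of $f$ keeps $w$ in a compact subset of $\C^*$ as $z$ approaches an interior puncture, and so lets me extend it to a holomorphic $\sigma:\C^*\to\C^*$ with $f\circ\sigma=f$ and $\sigma\ne\mathrm{id}$, contradicting Theorem~\ref{t:no-symmetries}.

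The step I expect to require the most care is the analysis at the ends: normalising $C_0$ cleanly, verifying that the ends are exactly the finitely many points over $\{0,\infty\}$, and—crucially—organising the dichotomy between an essential singularity of $\Psi$ (dispatched by big Picard) and a meromorphic extension (dispatched by compactness and little Picard), since the essential singularities that $g$ may carry at $0$ and $\infty$ are precisely what make the boundary behaviour of $\Psi$ delicate. The constant case, by contrast, is where Theorem~\ref{t:no-symmetries} does its work, converting an off-diagonal component into a forbidden symmetry of $f$.
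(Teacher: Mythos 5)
Your argument is correct, and its first half coincides with the paper's: both form the fibre product $C=\{f(z)=f(w)\}\subset\C^*\times\C^*$, observe that properness forces $f$ to be rational (so $C$ is an affine algebraic curve strictly containing $\Delta$), and conclude that $g(z)-g(w)$ is constant on each irreducible component because it omits the infinite set $2\pi i\Z^*$ --- a step you flesh out via normalisation, big Picard at the ends, and surjectivity of non-constant meromorphic functions on compact surfaces, where the paper merely asserts it. The endgame, however, is genuinely different. The paper stays inside the fibre product: using that both projections of an off-diagonal component have cofinite image, it composes two points $(x,y)$ and $(y,y')$ to produce $(x,y')\in C$ with $G(x,y')=2G(x,y)$, so the finite set $G(C\setminus\Delta)\subset\C^*$ would be closed under doubling, which is absurd; this is short and self-contained. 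You instead use the constancy $g(z)-g(w)\equiv\gamma\notin 2\pi i\Z$ on a component $S$ to show the first projection is injective on $S$, upgrade the resulting multivalued correspondence to a genuine holomorphic $\sigma:\C^*\to\C^*$ with $f\circ\sigma=f$ and $\sigma\neq\mathrm{id}$ (using properness of $f$ to rule out zeros or poles of $\sigma$ at the finitely many points missed by the projection), and then invoke Theorem~\ref{t:no-symmetries}. Your route buys a conceptual reduction --- every off-diagonal component of the fibre product of a proper $f$ is the graph of a symmetry, which the previous theorem forbids --- at the cost of importing that theorem and of the extension argument for $\sigma$; the paper's doubling trick avoids both. (Two small points of care in your write-up: $\Psi$ omits $2\pi i\Z$ only off the finite set $S\cap\Delta$, which does not affect the constancy or Picard arguments but should be said; and in the essential-singularity case one should note that big Picard gives infinitely many solutions of $\Psi\in 2\pi i\Z$, so that at least one lies off the diagonal.)
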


\begin{proof}
Since $e^g$ is not injective, $f$ is not constant.  Let $X$ be the pullback of $f$ by itself, that is, the $1$-dimensional subvariety of $\C^*\times\C^*$ defined by the equation $f(x)=f(y)$.  Consider the holomorphic function $G:X\to\C\setminus{2\pi i\Z^*}$, $(x,y)\mapsto g(x)-g(y)$.  It is zero only on the diagonal $\Delta\subset X$.

Suppose $f$ is proper.  Then $f$ is rational, so $X$ is an affine algebraic curve and $G$ is constant on each connected component of $X$.  Hence, $\Delta$ is a connected component of $X$.  We have $X\neq\Delta$, for otherwise $f$ would be injective and therefore could not have a pole at both $0$ and $\infty$.

Let $Y$ be a connected component of $X\setminus\Delta$.  Since $Y\cap\Delta=\varnothing$, $Y$ is not the line $\C^*\times\{a\}$ or $\{a\}\times\C^*$ for any $a\in\C^*$, so each of the two projections $Y\to\C^*$ has cofinite image.  Thus there are points $(x,y)$ and $(x',y')$ in $Y$ with $y=x'$.  Then $(x,y')\in X$ and $G(x,y')=G(x,y)+G(x',y')$.  This shows that the finite set $G(X\setminus\Delta)\subset\C^*$ is closed under multiplication by $2$, which is absurd.
\end{proof}

The following corollary is immediate.

\begin{corollary}  \label{c:essential-singularities-inevitable}
Let $(f,e^g):\C^*\to\C\times\C^*$ be a proper holomorphic injection.  Then $f$ has an essential singularity at $0$ or $\infty$.
\end{corollary}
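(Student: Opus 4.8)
The plan is to argue by contradiction and use Theorem~\ref{t:not-rational} to eliminate the only alternative to an essential singularity. Suppose $f$ has no essential singularity at either $0$ or $\infty$. Then $f$ is holomorphic on $\C^*$ and meromorphic at both punctures, hence a Laurent polynomial in $z$, that is, a rational function with poles only possibly at $0$ and $\infty$. Since $(f,e^g)$ is injective, Theorem~\ref{t:not-rational} gives that $f$ is not proper as a map $\C^*\to\C$, so $f$ fails to blow up at $0$ or at $\infty$; say, after relabelling, $f$ stays bounded as $z\to 0$. I will show this contradicts the properness of $(f,e^g)$.

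The crux is that the $\C^*$-component $e^g$ cannot escape to the ends of $\C^*$ at a puncture. Writing $g=\sum_n a_n z^n$ for the Laurent series of $g$ near $0$, the coefficient of $z^{-1}$ in $g'$ vanishes, so $g'$ has zero residue at $0$. If $e^g$ were meromorphic at $0$, the argument principle applied to $(e^g)'/e^g=g'$ would show that the order of $e^g$ at $0$ equals this residue, namely $0$; thus $e^g$ could have neither a pole nor a zero at $0$, and would extend holomorphically across $0$ with a nonzero value. The only other possibility is that $e^g$ has an essential singularity at $0$, in which case the Casorati--Weierstrass theorem supplies a sequence $z_k\to 0$ with $e^{g(z_k)}$ confined to a fixed compact annulus in $\C^*$. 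In either case there is a sequence $z_k\to 0$ along which $e^{g(z_k)}$ remains in a fixed compact subset of $\C^*$.

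Along such a sequence $f(z_k)$ is bounded, because $f$ is bounded near $0$, while $e^{g(z_k)}$ stays in a compact subset of $\C^*$; hence $(f(z_k),e^{g(z_k)})$ remains in a compact subset of $\C\times\C^*$ even though $z_k$ leaves every compact subset of $\C^*$. This is impossible for a proper map, so in fact $f$ must blow up at $0$, and symmetrically at $\infty$, giving $f$ a pole at each puncture. But then $f$ is proper, contradicting Theorem~\ref{t:not-rational}. Therefore $f$ has an essential singularity at $0$ or at $\infty$.

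The main obstacle is the middle step: the fact that a single-valued $g\in\O(\C^*)$ prevents $e^g$ from tending to $0$ or $\infty$ at a puncture, so that the escape to the ends of $\C\times\C^*$ demanded by properness must be carried entirely by $f$. Once this is established, identifying the contradiction with Theorem~\ref{t:not-rational} is routine.
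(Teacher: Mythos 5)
Your proof is correct and follows the same route as the paper, which simply declares the corollary immediate from Theorem~\ref{t:not-rational}. The one nontrivial detail you supply --- that for $g\in\O(\C^*)$ the function $e^g$ cannot tend to $0$ or $\infty$ at a puncture (since $\operatorname{Res}_0 g'=0$ rules out a zero or pole there), so properness of $(f,e^g)$ must be carried by $f$, forcing a pole of $f$ at each puncture and hence properness of $f$ --- is exactly the step the paper leaves unstated.
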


\end{document}